\documentclass[authoryear,preprint,11pt,3p]{elsarticle} 
\journal{European Journal of Operations Research}

\makeatletter
\def\els@aparagraph[#1]#2{\elsparagraph[#1]{#2\@addpunct{.}}}
\def\els@bparagraph#1{\elsparagraph*{#1\@addpunct{.}}}
\makeatother

\usepackage{amsmath,amsfonts,amssymb}
\usepackage{mathtools}
\usepackage{bm}
\usepackage{breqn}
\usepackage{hyperref}
\usepackage{siunitx}

\usepackage[dvipsnames]{xcolor}
\usepackage{cleveref}

\hypersetup{
    colorlinks=true,
    linkcolor=blue,
    citecolor=ForestGreen,
    urlcolor=blue
}

\crefformat{algorithm}{#2{\color{BrickRed}#1}#3}
\Crefformat{algorithm}{#2{\color{BrickRed}#1}#3}
\usepackage{mathrsfs}

\usepackage{amsthm}
\theoremstyle{plain}%
\newtheorem{theorem}{Theorem}

\DeclareMathOperator*{\argmin}{arg\,min}
\DeclareMathOperator\supp{supp}

\usepackage{rotating}
\usepackage{longtable}
\usepackage{booktabs}
\usepackage{threeparttable}

\usepackage{algpseudocode}
\usepackage{algorithm}
\algrenewcommand{\algorithmicindent}{0.7em}  

\DeclareMathOperator{\RU}{RU}
\DeclareMathOperator{\RD}{RD}
\DeclareMathOperator{\UT}{UT}
\DeclareMathOperator{\DT}{DT}
\DeclareMathOperator{\RAE}{RAE}

\usepackage{pgfplots}
\pgfplotsset{compat=newest}
\usepgfplotslibrary{statistics}
\usepackage{xcolor}
\usepackage[export]{adjustbox} 
\usepackage{pgfplotstable} 
\usetikzlibrary{calc}

\usepackage{caption} 
\makeatletter
\let\@msm@th@eqref\eqref
\renewcommand{\eqref}[1]{%
  \begingroup
  \leavevmode
  \color{blue}%
  \hypersetup{linkbordercolor=[named]{blue}}%
  \@msm@th@eqref{#1}%
  \endgroup
}
\makeatother

\usepackage{setspace}

\begin{document}

\begin{frontmatter}
\title{Scenario Reduction for Two-Stage Stochastic Mixed-Integer Programs}
\author[inst1]{Yannick Werner\corref{cor1}}\ead{yannick.werner@tugraz.at}
\author[inst2]{Juan Miguel Morales}\ead{juan.morales@uma.es}
\author[inst2]{Salvador Pineda}\ead{spineda@uma.es}
\author[inst3]{Line Roald}\ead{roald@wisc.edu}
\author[inst1]{Sonja Wogrin}\ead{wogrin@tugraz.at}

\affiliation[inst1]{
    organization={Graz University of Technology},
    city={Graz},
    country={Austria}
}
\affiliation[inst2]{
    organization={University of Málaga},
    city={Málaga},
    country={Spain}
}
\affiliation[inst3]{
    organization={University of Wisconsin-Madison},
    city={Madison},
    country={USA}
}
\cortext[cor1]{Corresponding author.}

\begin{abstract}
Two-stage stochastic mixed-integer programs are important tools for decision-making under uncertainty. Representing the uncertainty with many scenarios, however, can make them challenging to solve. Scenario reduction addresses this by finding a distribution supported on fewer scenarios that still yields similar optimal first-stage decisions. In this paper, we revisit the classical scenario reduction theory based on distances between probability distributions and the optimal mass transportation problem. The transportation problem's cost function captures scenario similarity and is central to the effectiveness of scenario reduction. We then review and compare various transportation cost functions from the literature and propose a new one. Using the Forward Selection Algorithm, we prove that our proposed cost function selects the best possible scenario from a given sample on the first draw with respect to the relative approximation error. To reduce the computational cost of evaluating this cost function, we further propose a hybrid algorithm with a scenario pre-selection phase. We assess solution quality and computational complexity on the two-stage stochastic unit commitment problem for small 24-bus and large 300-bus case studies. With only around five scenarios, the proposed cost function approximates the full-distribution optimum to within roughly \qty{2.1}{\percent} and \qty{0.4}{\percent} error for the small and large cases, respectively. In contrast, prevalent cost functions often need 25 scenarios or more to achieve that solution quality. The hybrid algorithm achieves similar solution quality while reducing wall-clock time by a factor of 18 and work (per Gurobi solver) by a factor of 66 on the large case study.
\end{abstract}


\begin{highlights}
\item We propose a new cost function for the optimal mass transportation problem.
\item Forward Selection is proven to select the optimal scenario first with that function.
\item We compare methods on stochastic unit commitment systems with up to 300 buses.
\item Our cost function yields \qty{2.4}{\percent} and \qty{0.4}{\percent} error with 1 and 5 scenarios on 300 buses.
\item  A hybrid algorithm cuts solver work 66x on 300 buses at indistinguishable accuracy.
\end{highlights}

\begin{keyword}
Stochastic programming, Optimal mass transportation, Heuristics, Unit commitment, Forward selection
\end{keyword}

\end{frontmatter}
\section{Introduction}

Two-stage stochastic mixed-integer programs (MIPs) have become a popular tool for supporting decision-making under uncertainty across a wide range of applications, including supply chain network design~\citep{Santoso2005}, integrated staffing and scheduling~\citep{Kim2015}, and the traveling salesperson problem~\citep{Cavaliere2026}.
Frequently, the uncertainty is approximated using a sufficiently large set of scenarios, motivated by theoretical advances in the sample average approximation \citep{Kleywegt2002} that provide mathematical guarantees on the asymptotic behavior of scenario-based approximations of continuous distributions \citep{Shapiro2021}. At the same time, using a large set of scenarios drastically increases the complexity of the stochastic MIPs~\citep{Klein2021}, potentially making them intractable or very challenging to solve, necessitating modern solution methods such as decomposition~\citep{Romeijnders2026}.
 
Another way to overcome this problem is to apply scenario reduction techniques, which aim to reduce the number of scenarios needed to accurately represent uncertainty in the stochastic program without compromising solution accuracy. 
Finding accurate reduced sets of scenarios supported on only a few atoms is particularly crucial for two-stage stochastic MIPs, where solution time increases exponentially with the number of scenarios.
Pioneering work by~\citet{Dupacova2003} relates the efficiency of scenario reduction to the distance between the probability distributions of the underlying random variables, as measured by the optimal mass transportation problem. They derive mathematical stability results and provide algorithms for scenario selection, most notably the Forward Selection Algorithm, which we recap later. This work was extended by~\citet{Heitsch2003} to improve the computational efficiency of the proposed algorithms and by~\citet{Heitsch2007} to consider different probability metrics. While these earlier works largely addressed underlying stability and convergence properties (see also \citet{Pflug2001}, \citet{Roemisch2003}, and \citet{Roemisch2007}), the applied distance measures only assess the similarity of input data and do not take the structure of the optimization problem into account, leading to slow convergence rates \citep{Henrion2018}. We refer to those cost functions as \textit{input-data-driven}.

More recent literature has focused on incorporating information from the stochastic optimization problem into the scenario reduction process. We refer to those as \textit{optimization-problem-driven}. In their seminal work, \citet{Morales2009} propose a scenario distance metric based on the optimal objective values of the second-stage recourse problems obtained by fixing the first-stage decision variables to their optimal values in the expected value problem.
\citet{Bruninx2016}, however, argue that this may lead to risk-averse decision-making and propose using the difference in the objective function value of a single scenario in deterministic two-stage problems instead. More recently, \cite{Bertsimas2023} propose a cost function that takes the loss of decision quality into account when making first-stage decisions based on a specific scenario, while another one is eventually realized.

Based on those works, we identify several gaps in the literature that challenge a rigorous comparison of different cost functions for the mass transportation problem. First, some papers proposing optimization-problem-driven cost functions or scenario reduction methods, such as \citet{Bertsimas2023} and \citet{Hewitt2021}, respectively, only compare their approach to input-data-driven methods. Second, papers that provide more in-depth comparisons, such as \citet{Dvorkin2014} and \citet{Zhuang2025}, often vary both distance measures and selection procedures simultaneously. Finally, some papers proposing optimization-problem-driven distances that require solving many small deterministic problems only consider convex \citep{Bertsimas2023} or small mixed-integer \citep{Zhuang2025} programs, thereby limiting evaluation of scalability and potentially making those methods impractical for large-scale problems. 

Connecting to the classical literature on scenario reduction based on the distance of probability distributions and the optimal mass transportation problem, we overcome the aforementioned gaps by the following contributions:
\begin{itemize}
    \item We compare several optimization-problem-driven cost functions for the optimal mass transportation problem and benchmark them against the most prominent input-data-driven one, using the Forward Selection Algorithm~\citep{Dupacova2003}, facilitating a \emph{ceteris paribus} comparison.
    \item We propose a new cost function for the mass transportation problem and prove a theorem stating that the first scenario drawn from a discrete distribution using this cost function and the Forward Selection Algorithm is optimal in terms of the relative approximation error.
    \item Using computationally challenging case studies, we compare all cost functions by evaluating the quality of the reduced scenario sets and the combined computational effort required to obtain them and to solve the corresponding reduced two-stage stochastic MIP.
    \item We present a novel yet simple hybrid algorithm that combines the cost function by~\citet{Morales2009} with the proposed one, achieving exceptional approximation quality while remaining computationally efficient, even for large-scale two-stage stochastic MIPs.
\end{itemize}

The methods in this paper are developed for and applicable to general scenario-based two-stage stochastic MIPs with uncertainty in the right-hand sides of the second-stage constraints. To demonstrate their performance, we consider the exemplary two-stage stochastic unit commitment (SUC) problem, as it has been well studied in the literature \citep{Takriti1996, Zheng2015,Haaberg2019}, and several of the proposed scenario reduction techniques \citep{Papavasiliou2013,Feng2014,Dvorkin2014,Bruninx2016,Zhuang2025} have been applied to it. The full mathematical formulation of the two-stage SUC problem is provided in~\ref{sec:app_SUC}, while we focus on generic formulations in the main body of the paper.

The remainder of this document is organized as follows. Section~\ref{sec:preliminaries} introduces a generic scenario-based two-stage stochastic MIP. Afterward, Section~\ref{sec:ScenarioReduction} recaps the classical theory on scenario reduction and the Forward Selection Algorithm proposed by~\citet{Dupacova2003}. Section~\ref{sec:CostFunctionMTP} introduces the different cost functions and derives the optimality theorem for the proposed one.
Performance results for the various cost functions applied to the two-stage stochastic unit commitment problem are presented and discussed in Section~\ref{sec:results}. Finally, Section~\ref{sec:conclusions} concludes.
\section{Preliminaries}
\label{sec:preliminaries}

In this paper, we consider two-stage stochastic MIPs with uncertainty only in the right-hand sides of the second-stage constraints, represented by the random variable $\xi$. We further assume that $\xi$ can be adequately represented by a finite set of scenarios following the discrete distribution $\mathbb{P} = \sum_{i \in \mathcal{I}} p_i \delta_{\xi_i}$, where $p_i$ and $\xi_i \in \mathbb{R}^d$ are the probability and location of scenario $i$ for $i \in \mathcal{I} = \{1,\dots,n\}$ under distribution $\mathbb{P}$ and $\delta_{\xi_i}$ is the Dirac distribution allocating unit mass at $\xi_i$. Under these assumptions, we formulate the following generic two-stage stochastic MIP:
\begin{equation}\label{eq:GeneralTwoStageProblem}
    \min_{x \in \mathcal{X}} z(x, \xi) = f(x) + \sum_{i \in \mathcal{I}} p_i G(x, \xi_i),
\end{equation}
where $G(x, \xi_i)$ is given by:
\begin{subequations}\label{eq:GeneralSecondStage}
\begin{align}
    \min_{v} \quad & a^\top v \\
    \mathrm{s.t.} \quad & W v \geq h^{\xi_i} - T x, \label{eq:GeneralSecondStageConstraints}\\
    & v \geq 0.
\end{align}
\end{subequations}
The goal of Problem~\eqref {eq:GeneralTwoStageProblem} is choosing decision variables $x \in \mathcal{X}$, where $\mathcal{X}$ is defined by a set of mixed-integer linear constraints, to minimize first-stage cost $f(x)$ plus the expected second-stage cost taken with respect to the distribution of the random variable $\xi$, i.e., $\mathbb{E}_{\xi \sim \mathbb{P}}[G(x, \xi)] =  \sum_{i \in \mathcal{I}} p_i G(x, \xi_i)$, while the second stage problem~\eqref{eq:GeneralSecondStage} chooses continuous actions $v$ to minimize the linear operational cost $a^\top v$ subject to a set of linear constraints~\eqref{eq:GeneralSecondStageConstraints} that depend on the first-stage decisions $x$ and the realization $\xi_i$ of the random variable $\xi$. Notably, we take the following two common assumptions: (1) The cost function coefficients $a$, the recourse matrix $W$, and the technology matrix $T$ are independent of the uncertainty (note that the latter is generally not needed), and (2) the second-stage problem~\eqref{eq:GeneralSecondStage} is feasible for any realization $x \in \mathcal{X}$ (relatively complete recourse). For a discussion of those assumptions and the two-stage stochastic program in general, we refer the reader to~\cite{Birge2011} and \cite{Shapiro2021}.

We use $x^*(\mathbb{P})$ and $z^*(\mathbb{P})$ to denote the optimal first-stage decisions and objective function value, respectively, of Problem~\eqref{eq:GeneralTwoStageProblem} with respect to distribution $\mathbb{P}$:
\begin{subequations}\label{eq:TwoStageOpt}
\begin{align}
    x^*(\mathbb{P}) = \arg&\min_{x \in \mathcal{X}} f(x) + \sum_{i \in \mathcal{I}} p_i G(x, \xi_i) \label{eq:TwoStageOptSol}\\
    z^*(\mathbb{P}) = &\min_{x \in \mathcal{X}} f(x) + \sum_{i \in \mathcal{I}} p_i G(x, \xi_i). \label{eq:TwoStageOptObj}
\end{align}
\end{subequations}

Additionally, we define for any given $\hat{x} \in \mathcal{X}$, $z^*(\hat{x},\mathbb{P})$, i.e., the optimal value function of Problem~\eqref{eq:GeneralTwoStageProblem} with first-stage decisions fixed at $\hat{x}$, that is:
\begin{equation}
    z^*(\hat{x},\mathbb{P}) = f(\hat{x}) + \sum_{i \in \mathcal{I}} p_i G(\hat{x}, \xi_i).
\end{equation}

For the scenario reduction later on, we further define a deterministic version of Problem~\eqref{eq:GeneralTwoStageProblem} considering a distribution supported on a single scenario $\xi_i$ only:
\begin{equation}\label{eq:GeneralSingleScenarioDP}
    \min_{x \in \mathcal{X}} z(x,\xi_i) = f(x) + G(x, \xi_i),
\end{equation}
where we denote the corresponding optimal first-stage decision as:
\begin{equation}\label{eq:SingleScenarioOptSol}
    x^*(\xi_i) = \arg\min_{x \in \mathcal{X}} z(x,\xi_i).
\end{equation}

\section{A recap on scenario reduction}
\label{sec:ScenarioReduction}
The first thing we want to point out is that the real difficulty in stochastic programming in general is finding the optimal first-stage decisions $x^*(\mathbb{P})$. Once those are found, $z^*(\mathbb{P})$ can be easily determined by evaluating the individual, independent second-stage problems~\eqref{eq:GeneralSecondStage}, as, for any given $x^*$ (in fact, for any feasible $\hat{x} \in \mathcal{X}$), Equation~\eqref{eq:TwoStageOptObj} in combination with \eqref{eq:GeneralSingleScenarioDP} simplifies to:
\begin{equation}\label{eq:SecondStageEvaluationDistribution}
    z^*(x^*,\mathbb{P}) = \sum_{i \in \mathcal{I}} p_i (f(x^*) + G(x^*, \xi_i)) = \sum_{i \in \mathcal{I}} p_i z(x^*, \xi_i).
\end{equation}
Now, suppose there is another distribution $\mathbb{Q} = \sum_{j \in \mathcal{J}} q_j \delta_{\zeta_j}$, where $q_j$ and $\zeta_j \in \mathbb{R}^d$ are the probability and location of scenario $j$ for $j \in \mathcal{J} = \{1,\dots,m\}$ under distribution $\mathbb{Q}$ with $m < n$, and ideally $m \ll n$, as well as $x^*(\mathbb{Q}) \approx x^*(\mathbb{P})$.
Then we would not need to solve the optimization problem~\eqref{eq:GeneralTwoStageProblem} under the distribution $\mathbb{P}$, which can be intractable or computationally challenging, but instead under $\mathbb{Q}$, which is much easier. The goal of scenario reduction, which we discuss next, is to find such a distribution $\mathbb{Q}$.

\subsection{Distance of probability distributions}
\label{sec:ProbabilityDistributionDistance}
One common way in the mathematical programming literature to assess the similarity of probability distributions $\mathbb{P}$ and $\mathbb{Q}$ is to measure their distance $\mathfrak{D}$ using the Monge-Kantorovich mass transportation problem (see e.g. \citet{Heitsch2003}):
\begin{equation}\label{eq:mass_transport}
\begin{aligned}
\mathfrak{D}(\mathbb{P}, \mathbb{Q}) = \min_{\pi \in \mathbb{R}_{+}^{n \times m}} \quad & \sum_{i=1}^{n} \sum_{j=1}^{m} \pi_{ij} c(\xi_i, \zeta_j) \\
\text{s.\,t.} \quad & \sum_{j=1}^{m} \pi_{ij} = p_i, \quad \forall i \in \mathcal{I}, \\
& \sum_{i=1}^{n} \pi_{ij} = q_j, \quad \forall j \in \mathcal{J},
\end{aligned}
\end{equation}
where $c(\xi_i, \zeta_j)$ is some function that measures the cost of moving the probability mass from scenario $\xi_i$ under distribution~$\mathbb{P}$ to $\zeta_j$ under distribution~$\mathbb{Q}$.
Let the support of distribution~$\mathbb{P}$ be defined as $\supp(\mathbb{P}) = \{\xi_i : i \in \mathcal{I} \}$ and by $\mathcal{P}(\Xi,m)$, for any set $\Xi \in \mathbb{R}^d$, the family of discrete distributions supported on $m$ points (scenarios) in $\Xi$. Then the goal of the \textit{discrete scenario reduction problem} is to find a distribution $\mathbb{Q} \in \mathcal{P}(\supp(\mathbb{P}),m)$, with $m < n$, and ideally $m \ll n$, such that both distributions $\mathbb{P}$ and $\mathbb{Q}$ are close with respect to some distance, e.g., as defined here in Equation~\eqref{eq:mass_transport} (cf. \citet{Rujeerapaiboon2018}). From now on, we restrict ourselves to the discrete scenario reduction problem in this paper (i.e., the atoms in the reduced distribution $\mathbb{Q}$ must be drawn from the support of $\mathbb{P}$), and therefore use $\xi_j$ instead of $\zeta_j$ in the following to enhance notational clarity.

\subsection{Forward Selection Algorithm}
\label{sec:ForwardSelectionAlgorithm}
In the seminal work on scenario reduction for stochastic programming using probability metrics, \citet{Dupacova2003} prove that in the specific case where the reduced distribution $\mathbb{Q}$ is only supported on atoms of $\mathbb{P}$, such that $\mathcal{J} \subset \mathcal{I}$, that for any $\mathcal{J}$, there is an analytical solution to Problem~\eqref{eq:mass_transport}, which only depends on $p_i$ and $c(\xi_i,\xi_j)$ [\cite{Dupacova2003}, Theorem 3.1]:
\begin{equation}\label{eq:ProbDist_set}
    \mathfrak{D}(\mathbb{P}, \mathbb{Q}) = \mathfrak{D}(\mathbb{P}, \mathcal{J}) \coloneq \sum_{i \in \mathcal{I} \setminus \mathcal{J}} p_i \min_{j \in \mathcal{J}} c(\xi_i, \xi_j),
\end{equation}
with
\begin{subequations}
\begin{align}
    q_j &= p_j + \sum_{i \in \mathcal{I}_j} p_i, \mathrm{where} \label{eq:ProbabiltyRedistribution}\\
    \mathcal{I}_j &\coloneq \{ i \in \mathcal{I}\,\setminus\,\mathcal{J} : j = \arg\min_{j' \in \mathcal{J}} ~ c(\xi_i, \zeta_{j'})\}.
\end{align}
\end{subequations}
Expression~\eqref{eq:ProbabiltyRedistribution} is referred to as the optimal probability redistribution rule. Intuitively, the redistribution of probabilities is such that the probability mass $p_i$ of all scenarios in $\mathcal{I}$ under distribution $\mathbb{P}$, which are closest to scenario $j$ with respect to $c(\xi_i, \xi_j)$, is aggregated. The theorem implies that the goal of scenario reduction is actually finding the index set $J$, since the corresponding optimal probability distribution~$\mathbb{Q}$ follows immediately.

Based on that theorem, \citet{Dupacova2003} propose the Forward Selection Algorithm for which a pseudo-code is provided in Algorithm~\ref{alg:ForwardSelection}. We briefly recap the algorithm here for completeness, as it will be relevant to the derivation of our theoretical result later. To ease notation, we introduce an auxiliary set~$\mathcal{R}$ that captures all unselected scenarios from the original sample. Intuitively, the algorithm iteratively populates the set of scenarios $\mathcal{J}$ by a single scenario $j$ based on how the distance $\mathfrak{D}(\mathbb{P}, \mathcal{J})$, which is dependent on the choice of cost function $c(\xi_i,\xi_j)$, would change if scenario $j$ were added to the reduced set of scenarios, until $m$ scenarios are selected in total. For clarity, we will use $\mathbb{P}_n$ and $\mathbb{Q}_m$ from here on to highlight the number of atoms the distributions are supported on.

While the Forward Selection Algorithm constitutes a greedy strategy, it is proven in~\cite{Dupacova2003} that it provides a globally optimal solution (selection) for $m=1$. We acknowledge that computational improvements to the Forward Selection Algorithm have been suggested in the literature, e.g., in~\cite{Heitsch2003}, but resort to the simpler initial version here for clarity.
%

\begin{algorithm}
\setstretch{1.15}
\caption{Forward Selection \citep{Dupacova2003}}
\begin{algorithmic}[1]
\Statex \textbf{inputs} $\mathbb{P}_n$, $m$, $c(\xi_i,\xi_j)$
\State set $\mathcal{J} \gets \{\}$
\While{$|\mathcal{J}| < m$}
    \State set $\mathcal{R} \gets \mathcal{I} \setminus \mathcal{J}$
    \State $\displaystyle j = \operatorname{arg}\min_{j' \in \mathcal{R}} \sum\nolimits_{i \in \mathcal{R} \setminus \{ j' \}} p_i \min_{j'' \in \mathcal{J} \cup \{j'\} } c(\xi_i,\xi_{j''}) $
    \State $\mathcal{J} \gets \mathcal{J} \cup \{j\}$
\EndWhile
\For{$j \in \mathcal{J}$}
    \State $ \mathcal{I}_j \coloneq \{ i \in \mathcal{I}\,\setminus\,\mathcal{J} : j = \argmin_{j' \in \mathcal{J}} ~ c(\xi_i, \xi_{j'}) \}$ 
    \State $ \text{update } q_j = p_j + \sum\nolimits_{i \in \mathcal{I}_j} p_i$
\EndFor
\Statex \textbf{return} $\mathbb{Q}_m$
\end{algorithmic}
\label{alg:ForwardSelection}
\end{algorithm}

\subsection{Evaluation of scenario reduction methods}
While the Forward Selection Algorithm~\cref{alg:ForwardSelection} provides an efficient greedy heuristic to find a distribution~$\mathbb{Q}_m$, supported on $m$ scenarios, with a small distance~\eqref{eq:mass_transport}, its ability to accurately approximate the optimal decisions $x^*(\mathbb{P}_n)$ of Problem~\eqref{eq:GeneralSecondStage} hinges on the choice of the cost function $c(\xi_i,\xi_j)$. Before discussing different formulations of the cost function in Section~\ref{sec:CostFunctionMTP}, we briefly discuss how to evaluate the approximation quality of $\mathbb{Q}_m$ related to Problem~\eqref{eq:GeneralTwoStageProblem}.

For clarity, we define the optimal solution of the two-stage stochastic problem~\eqref{eq:GeneralTwoStageProblem} with respect to distribution $\mathbb{Q}_m$ in line with Equations~\eqref{eq:TwoStageOpt} as:
\begin{subequations}
\begin{align}
    x^*(\mathbb{Q}_m) &= \arg\min_{x \in \mathcal{X}} \sum_{j \in \mathcal{J}} q_j \cdot z(x,\xi_j), \label{eq:TwoStageOptSolQ}  \\
    z^*(\mathbb{Q}_m) &= \min_{x \in \mathcal{X}} \sum_{j \in \mathcal{J}} q_j \cdot z(x,\xi_j). \label{eq:TwoStageOptObjQ}
\end{align}
\end{subequations}

Building upon the definition in~\cite{Pflug2001} and \cite{kaut2003evaluation}, we then define the \textit{relative approximation error} $\RAE$ to quantify how well $\mathbb{Q}_m$ approximates $\mathbb{P}_n$ with respect to the two-stage stochastic problem~\eqref{eq:GeneralTwoStageProblem}:
\begin{align}\label{eq:RAE}
    & \RAE(\mathbb{P}_n,\mathbb{Q}_m) = \frac{z^*(x^*(\mathbb{Q}_m),\mathbb{P}_n) - z^*(\mathbb{P}_n)}{z^*(\mathbb{P}_n)}, \\
    & \mathrm{where}~z^*(x^*(\mathbb{Q}_m),\mathbb{P}_n) = \sum_{i \in \mathcal{I}} p_i \cdot z(x^*(\mathbb{Q}_m),\xi_i). \nonumber
\end{align}
The $\RAE$ can be interpreted as the \textit{regret} of making decisions based on distribution $\mathbb{Q}_m$ while the true distribution is $\mathbb{P}_n$. 
We point out that this metric is only well-defined when $z^*(\mathbb{P}_n) \neq 0$ and that it is always nonnegative for any $x \in \mathcal{X}$ by definition:
\begin{equation}
    z^*(\mathbb{P}_n) =  z^*(x^*(\mathbb{P}_n)), \mathbb{P}_n) \leq z^*(x, \mathbb{P}_n).
\end{equation}
This implies that there will always be a loss in decision quality when approximating distribution $\mathbb{P}_n$ with any distribution $\mathbb{Q}_m$ with respect to Problem~\eqref{eq:GeneralTwoStageProblem}, unless $x^*(\mathbb{Q}_m) = x^*(\mathbb{P}_n)$. Trivially, the $\RAE$ is zero when distributions $\mathbb{P}_n$ and $\mathbb{Q}_m$ are equal.

We acknowledge that evaluating metric~\eqref{eq:RAE} is generally impractical, as the underlying assumption in scenario reduction is that it is not possible to solve the two-stage stochastic problem~\eqref{eq:GeneralTwoStageProblem} for distribution $\mathbb{P}_n$ in the first place. This fact has given rise to research on stability theorems \citep{Pflug2001, Dupacova2003, Roemisch2003}, focusing on establishing upper bounds for this error based on the distance between the probability distributions $\mathbb{P}_n$ and $\mathbb{Q}_n$. As our main focus here is to evaluate the impact of the choice of cost function $c(\xi_i,\xi_j)$, we assume that we can actually solve Problem~\eqref{eq:GeneralTwoStageProblem} for comparison purposes.

\section{Cost functions for the transport problem}
\label{sec:CostFunctionMTP}
After recapping the scenario reduction process, we now want to take a look at various formulations for the cost function $c(\xi_i, \xi_j)$ of the mass transportation problem~\eqref{eq:mass_transport} suggested in the literature. Note that in the context of stochastic programming, ideally we would like to define the cost function $c$ and distance $\mathfrak{D}$ directly with respect to the optimal first-stage decisions $x^*(\mathbb{P})$ and $x^*(\mathbb{Q})$ of Problem~\eqref{eq:GeneralTwoStageProblem}. However, for real-world applications, this is impractical, as we cannot solve those problems and therefore do not know the decisions $x^*(\mathbb{P})$ (or even $x^*(\mathbb{Q})$ for every possible distribution $\mathbb{Q}$). Therefore, we need to find a measure $c$ that provides a good way to approximate the loss in decision quality when moving from $x^*(\mathbb{P})$ to $x^*(\mathbb{Q})$.

We can generally separate the formulations proposed in the literature into two categories. First, those measures that are solely based on the realizations (scenarios) of the random variables $\xi_i$, i.e., the values of $h^{\xi_i}$ in Problem~\eqref{eq:GeneralSecondStage} discussed in this paper, which we refer to as \textit{input-data-driven}. And, secondly, those functions that take some information from the stochastic optimization problem~\eqref{eq:GeneralTwoStageProblem} or some approximation of it into account. We refer to those as \textit{optimization-problem-driven}. In the following, we introduce a selection of the most commonly discussed cost functions $c(\xi_i,\xi_j)$.

\subsection{Input-data-driven}
In the context of input-data-driven measures, it has been proposed many times, e.g., in~\cite{Rujeerapaiboon2018}, to use the squared Euclidean norm of the scenario realizations:
\begin{equation}\label{eq:c_ID}
    c^{\mathrm{ID}}(\xi_i, \xi_j) = \lVert h^{\xi_i} - h^{\xi_j} \rVert_2^2.
\end{equation}
As shown, e.g., in~\cite{Pflug2011}, the resulting distance $\mathfrak{D}$ equals the squared type-2 Wasserstein distance. We refer the interested reader to~\citet{Rujeerapaiboon2018}, who provide an in-depth description and analysis of the metric and its application to scenario reduction. Here, we restrict ourselves to this distance within the class of input-data-driven measures, as it has also been used for comparison in other studies investigating optimization-problem-driven distances, such as \citet{Bertsimas2023}, and has been successfully applied to the two-stage SUC problem, e.g., in \citet{Dvorkin2014} and \citet{Zhuang2025}.

\subsection{Optimization-problem-driven}
In the following, we introduce several optimization-problem-driven transportation cost functions that, in contrast to the input-data-driven one in Equation~\eqref{eq:c_ID}, measure the similarity of scenarios by incorporating information related to the two-stage stochastic problem~\eqref{eq:GeneralTwoStageProblem}. In particular, they all rely on solving various versions and combinations of the single-scenario deterministic problem~\eqref{eq:GeneralSingleScenarioDP} and single-scenario second-stage problems~\eqref{eq:GeneralSecondStage} and therefore depend on the specific problem~\eqref{eq:GeneralTwoStageProblem} desired to solve.

\subsubsection{\citet{Morales2009}}
\citet{Morales2009} suggest measuring the similarity of scenarios based on the difference in second-stage costs when fixing the first-stage variables to those of the optimal solution of the expected value problem (EVP) \citep{Birge2011}:
\begin{equation}\label{eq:c_Mo}
    c^{\mathrm{Mo}}(\xi_i, \xi_j) = | z(x^*(\bar{\xi}), \xi_i) - z(x^*(\bar{\xi}), \xi_j) |,
\end{equation}
where $\bar{\xi} = \mathbb{E}_{\mathbb{P}_n}[\xi]$ is the expected realization of the random variable $\xi$ under distribution $\mathbb{P}_n$ and $x^*(\bar{\xi})$ is the optimal first-stage decision of the EVP.
When applied to two-stage stochastic MIPs, evaluating this measure requires solving a single MIP~\eqref{eq:GeneralSingleScenarioDP}, i.e., the EVP, and $n = |\mathcal{I}|$ second-stage LPs~\eqref{eq:GeneralSecondStage}.

\subsubsection{\citet{Bruninx2016}}
\citet{Bruninx2016} argue that the measure $c^{\mathrm{Mo}}$ may lead to an overly conservative, risk-averse selection of scenarios, as the first-stage solution to the EVP provides very little flexibility in extreme cases and therefore leads to high recourse costs in those cases. To overcome this problem, they suggest using the difference in optimal objective value of the single-scenario problems~\eqref{eq:GeneralSingleScenarioDP}: 
\begin{equation}\label{eq:c_Br}
    c^{\mathrm{Br}}(\xi_i, \xi_j) = | z(x^*(\xi_i), \xi_i) - z(x^*(\xi_j), \xi_j) |.
\end{equation}
This requires solving $n$ MIPs~\eqref{eq:GeneralSingleScenarioDP}.

\subsubsection{\citet{Bertsimas2023}}
Aligned with the stability theory presented in the classical literature on scenario reduction for convex programs using input-data-driven transportation cost functions \citep{Dupacova2003,Heitsch2003}, \citet{Bertsimas2023} propose the distance:
\begin{align}\label{eq:c_Be}
    2 \cdot c^{\mathrm{Be}}(\xi_i, \xi_j) = z&(x^*(\xi_j), \xi_i) - z(x^*(\xi_i),\xi_i) + \nonumber\\
    z&(x^*(\xi_i), \xi_j) - z(x^*(\xi_j),\xi_j),
\end{align}
which accounts for the symmetric loss in decision quality when making first-stage decisions based on scenario $\xi_i$ and then observing $\xi_j$, and vice versa. In order to evaluate this cost function, one needs to solve $n$ MIPs~\eqref{eq:GeneralSingleScenarioDP} and $n^2 - n$ second-stage LPs~\eqref{eq:GeneralSecondStage}.

\subsubsection{Our proposal}

In this paper, we propose the following novel cost function for the optimal mass transportation problem~\eqref{eq:mass_transport}:
\begin{equation}\label{eq:c_Pr}
    c^{\mathrm{Pr}}(\xi_i, \xi_j) = z(x^*(\xi_j), \xi_i) - z(x^*(\xi_i),\xi_i),
\end{equation}
which measures the \emph{asymmetric} regret incurred when the optimal first-stage decisions are determined based on scenario $\xi_j$, while the actual realization is $\xi_i$. It therefore implicitly captures the impact of the one-directional loss of information when approximating the original sample distribution $\mathbb{P}_n$ with the reduced distribution $\mathbb{Q}_m$.
Similar to $c^{\mathrm{Be}}$, evaluating $c^{\mathrm{Pr}}$ requires solving $n$ MIPs~\eqref{eq:GeneralSingleScenarioDP} and $n^2-n$ second-stage LPs~\eqref{eq:GeneralSecondStage}. The motivation for introducing this cost function is not merely heuristic. Rather, it is specifically designed to endow the classical scenario reduction framework with a theoretical property that, to the best of our knowledge, has not previously been established. In particular, combining cost function $c^{\mathrm{Pr}}$ with the optimal mass transportation formulation~\eqref{eq:mass_transport} and the Forward Selection Algorithm~\cref{alg:ForwardSelection} yields the following theorem, which constitutes the main theoretical contribution of this paper.

\begin{theorem}\label{th:Theorem1}
    Let $\mathcal{Q}_1 = \{ \mathbb{Q}_1^j : j \in \mathcal{I} \}$ denote the family of probability distributions obtained by drawing a single scenario from distribution $\mathbb{P}_n$, and $j^*$ be the first scenario drawn using the Forward Selection Algorithm~\cref{alg:ForwardSelection} and cost function $c^{\mathrm{Pr}}$. Then the distribution $\mathbb{Q}_1^{j^*}$ minimizes the $\RAE$ for $m = 1$, such that
    \[
    \mathbb{Q}_1^{j^*} = \arg\min_{\mathbb{Q}_1 \in \mathcal{Q}_1} \mathrm{RAE}(\mathbb{P}_n,\mathbb{Q}_1).
    \]
\end{theorem}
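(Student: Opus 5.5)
The plan is to compute both sides explicitly for $m=1$ and show that the Forward Selection objective and the $\RAE$ differ only by a positive factor and an additive constant that does not depend on the candidate $j$.

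\textbf{Step 1: the objective of the first Forward Selection iteration.} With $\mathcal{J}=\{\}$ and $\mathcal{R}=\mathcal{I}$, the inner minimum is over the single element $j'$. The objective of the first iteration is therefore
\[
\Phi(j) = \sum_{i \in \mathcal{I}\setminus\{j\}} p_i\, c^{\mathrm{Pr}}(\xi_i,\xi_j).
\]
Since $c^{\mathrm{Pr}}(\xi_j,\xi_j) = z(x^*(\xi_j),\xi_j) - z(x^*(\xi_j),\xi_j) = 0$, we may add the $i=j$ term for free. This gives
\[
\Phi(j) = \sum_{i \in \mathcal{I}} p_i\, z(x^*(\xi_j),\xi_i) - \sum_{i\in\mathcal{I}} p_i\, z(x^*(\xi_i),\xi_i).
\]
The second sum is a constant $C$ independent of $j$. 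Using \eqref{eq:SecondStageEvaluationDistribution}, the first sum equals $z^*(x^*(\xi_j),\mathbb{P}_n)$. Hence $\Phi(j) = z^*(x^*(\xi_j),\mathbb{P}_n) - C$.

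\textbf{Step 2: the reduced distribution and its first-stage decision.} For $m=1$, the redistribution rule \eqref{eq:ProbabiltyRedistribution} gives $q_j = 1$, so $\mathbb{Q}_1^j = \delta_{\xi_j}$. By \eqref{eq:TwoStageOptSolQ}, $x^*(\mathbb{Q}_1^j) = \arg\min_x z(x,\xi_j) = x^*(\xi_j)$ as in \eqref{eq:SingleScenarioOptSol}. Substituting into \eqref{eq:RAE} yields
\[
\RAE(\mathbb{P}_n,\mathbb{Q}_1^j) = \frac{z^*(x^*(\xi_j),\mathbb{P}_n) - z^*(\mathbb{P}_n)}{z^*(\mathbb{P}_n)}.
\]
This is an affine function of $z^*(x^*(\xi_j),\mathbb{P}_n)$, and its denominator does not depend on $j$.

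\textbf{Step 3: conclusion.} Both $\Phi(j)$ and $\RAE(\mathbb{P}_n,\mathbb{Q}_1^j)$ are strictly increasing functions of the same quantity $z^*(x^*(\xi_j),\mathbb{P}_n)$. Their argmins over $j\in\mathcal{I}$ therefore coincide, so $j^*$ minimizes the $\RAE$ over $\mathcal{Q}_1$.

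\textbf{Main obstacle.} The delicate point is the sign of $z^*(\mathbb{P}_n)$ in Step 3. The $\RAE$ is only well-defined when $z^*(\mathbb{P}_n)\neq 0$. If $z^*(\mathbb{P}_n)$ is negative, dividing by it reverses the order, and the $\RAE$ would have to be read with an absolute value in the denominator. I would assume $z^*(\mathbb{P}_n)>0$, which holds for cost-minimization problems such as SUC, or equivalently interpret the $\RAE$ with $|z^*(\mathbb{P}_n)|$ in the denominator; under either reading the monotonicity holds. A secondary technicality is non-uniqueness of $x^*(\xi_j)$ and of the argmin. I would fix a single choice of $x^*(\xi_j)$ used consistently in both $c^{\mathrm{Pr}}$ and $\mathbb{Q}_1^j$, and interpret both argmins as sets; the argument then shows these sets are equal.
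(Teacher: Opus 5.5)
Your proposal is correct and follows essentially the same route as the paper's proof: you use the zero diagonal $c^{\mathrm{Pr}}(\xi_j,\xi_j)=0$ to rewrite the first Forward Selection step as $\arg\min_j \sum_{i} p_i c^{\mathrm{Pr}}(\xi_i,\xi_j)$, drop the $j$-independent term $\sum_i p_i z(x^*(\xi_i),\xi_i)$, identify $x^*(\mathbb{Q}_1^j)=x^*(\xi_j)$ for the Dirac measure, and invoke $z^*(\mathbb{P}_n)>0$ (which the paper also assumes, in a footnote) so that the $\RAE$ is increasing in $z^*(x^*(\xi_j),\mathbb{P}_n)$. Your remark about fixing one choice of $x^*(\xi_j)$ consistently and reading both argmins as sets is a small rigor refinement that the paper leaves implicit.
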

\textit{Proof:} See~\ref{sec:app_proof}.
\vspace{1mm}

\noindent Theorem~\eqref{th:Theorem1} provides the theoretical rationale behind cost function~\eqref{eq:c_Pr}. When $m=1$, minimizing the transportation cost induced by $c^{\mathrm{Pr}}$ is equivalent to minimizing the resulting $\RAE$. Consequently, the first scenario selected by the Forward Selection Algorithm~\cref{alg:ForwardSelection} is exactly the one that yields the smallest approximation error among all single-scenario distributions. Although this optimality guarantee is restricted to the first selected scenario, our computational experiments demonstrate that this first choice can already provide an accurate approximation to the optimal first-stage decisions for the full distribution and serves as an excellent starting point for constructing high-quality reduced distributions with $m>1$.

\subsection{Summary}
We summarize the required optimization problems that must be solved to evaluate each cost function $\mathcal{C} = \{ \mathrm{ID},\mathrm{Mo},\mathrm{Br}, \mathrm{Be}, \mathrm{Pr} \}$ in Table~\ref{tab:OptProbRequirments}.
\begin{table}
\centering
\caption{Comparison of optimization problems that must be solved to evaluate the cost function $c$.}
\label{tab:OptProbRequirments}
\begin{tabular}{lccccc}
\toprule
 & ID & Mo & Br & Be & Pr \\
\midrule
MIPs & $0$ & $1$ & $n$ & $n$ & $n$ \\
LPs & $0$ & $n$ & $0$ & $n^2 - n$ & $n^2 - n$ \\
\bottomrule
\end{tabular}
\end{table}
In Section~\ref{sec:large_case_comp_exp}, we present numerical experiments and a detailed analysis of the computational expenses of solving these problems for large-scale stochastic MIPs.

Algorithm~\cref{alg:ScenarioReduction} provides a pseudo-code for the whole scenario reduction process, including solving the reduced two-stage stochastic MIPs and evaluating the solution for each cost function $c \in \mathcal{C}$. Again, as a reminder, the goal of scenario reduction is to find a distribution $\mathbb{Q}_m$ based on an original distribution $\mathbb{P}_n$, with $m < n$ and ideally $m \ll n$, that provides a good approximation $x^*(\mathbb{Q}_m) \approx x^*(\mathbb{P}_n)$ with respect to the stochastic programming problem~\eqref{eq:GeneralTwoStageProblem}, evaluated based on the $\RAE$ in Equation~\eqref{eq:RAE}.

\begin{algorithm}
\setstretch{1.15}
\caption{Scenario Reduction}
\begin{algorithmic}[1]
\State \textbf{inputs} $\mathbb{P}_n$, $c$, $m$
\Statex \textit{Step 1:} \textit{Calculate cost matrix}
\For{$i \in \mathcal{I}$, $j \in \mathcal{I}$}
    \State calculate $c(\xi_i, \xi_j)$ solving problems in Table~\ref{tab:OptProbRequirments}
\EndFor
\Statex \textit{Step 2:} \textit{Determine reduced distribution $\mathbb{Q}_m$}
\State run Algorithm~\cref{alg:ForwardSelection} ($\mathbb{P}_n$, $m$, $c(\xi_i, \xi_j)$)
\Statex receive $\mathbb{Q}_m$
\Statex \textit{Step 3:} \textit{Solve reduced stochastic problem}
\State solve $x^*(\mathbb{Q}_m) \in \arg\min_{x \in \mathcal{X}} ~ z(x,\mathbb{Q}_m)$
\Statex \textit{Step 4:} \textit{Evaluate first-stage decisions}
\State calculate $\RAE(\mathbb{P}_n,\mathbb{Q}_m)$
\State \textbf{return} $\mathbb{Q}_m$, $\RAE(\mathbb{P}_n,\mathbb{Q}_m)$
\end{algorithmic}
\label{alg:ScenarioReduction}
\end{algorithm}

\subsection{Hybrid algorithm}
As we show in the results later, the cost function $c^{\mathrm{Mo}}$~\eqref{eq:c_Mo} is very good at identifying which scenarios are \emph{redundant} at low computational requirement cost, but does not necessarily reveal which \emph{few} scenarios best approximate the optimal first-stage solution of the full problem. Cost function $c^{\mathrm{Pr}}$~\eqref{eq:c_Pr}, on the other hand, is excellent at that job but computationally expensive to evaluate (cf. Table~\ref{tab:requirement_cost}). To overcome their individual drawbacks, we propose the \emph{Hybrid} Algorithm~\cref{alg:Hybrid} targeting the high approximation performance of $c^{\mathrm{Pr}}$ at the low computational cost of $c^{\mathrm{Mo}}$.
In Phase 1, the algorithm uses cost function $c^{\mathrm{Mo}}$ to quickly find a small set of representative scenarios $a \in \mathcal{A} \subset \mathcal{I}$, with $|\mathcal{A}| = r$ and $r \ll n$, that well approximates the initial sample $\mathcal{I}$. Then, in Phase 2, the algorithm applies the cost function $c^{\mathrm{Pr}}$ to further reduce this pre-selection. For that, it is only necessary to solve $r$ single-scenario MIP problems~\eqref{eq:GeneralSingleScenarioDP} and $r^2 - r$ LP problems~\eqref{eq:GeneralSecondStage} for scenarios $a \in \mathcal{A}$, substantially reducing the computational burden. Note that while Theorem~\ref{th:Theorem1} still holds for the pre-selected set of scenarios $\mathcal{A}$, it does not extend to the initial scenario sample~$\mathcal{I}$ when using the Hybrid Algorithm~\cref{alg:Hybrid}. However, we still find that the algorithm often picks the best scenario at $m=1$ and yields good distributions $\mathbb{Q}_m$ in general, as we show in our numerical experiments in the next section.

\providecommand{\algphase}[1]{\Statex\textsc{#1}}
\begin{algorithm}[t]
\caption{Hybrid Scenario Reduction}
\label{alg:Hybrid}
\begin{algorithmic}[1]
\State \textbf{inputs} $\mathbb{P}_n$, $m$, $r$
\algphase{Phase 1: Pre-group sample $\mathbb{P}_n$ with $c^{\mathrm{Mo}}$}
\State run Steps~1--2 of Algorithm~\cref{alg:ScenarioReduction} $(\mathbb{P}_n, c^{\mathrm{Mo}}, r)$
\Statex receive $\mathbb{Q}_r$
\algphase{Phase 2: Reduce $\mathbb{Q}_r$ with $c^{\mathrm{Pr}}$}
\State run Steps~1--2 of Algorithm~\cref{alg:ScenarioReduction} $(\mathbb{Q}_r, c^{\mathrm{Pr}}, m)$
\Statex receive $\mathbb{Q}_m$
\algphase{Continue with Steps 3--4 of Algorithm~\cref{alg:ScenarioReduction}}
\State \textbf{return} $\mathbb{Q}_m$, $\RAE(\mathbb{P}_n,\mathbb{Q}_m)$
\end{algorithmic}
\end{algorithm}

\section{Numerical results}
\label{sec:results}
In the following, we present numerical results obtained by applying the scenario reduction algorithms presented in the last section to a well-studied two-stage stochastic MIP problem, namely, the unit commitment problem, which we present in detail in~\ref{sec:app_SUC}, while varying the cost functions $c$ introduced in Section~\ref{sec:CostFunctionMTP}. The optimization problems and algorithms are implemented in the Julia Programming language v1.12.2 \citep{bezanson2017julia} using JuMP v1.30.1 \citep{Lubin2023}, and the code, together with the case study data, is available on GitHub \citep{github_SRSUC}. All models are solved with Gurobi v.13.0.2 \citep{gurobi} on a single node of the MUSICA HPC equipped with two AMD EPYC 9654 processors using up to 192 cores in total, clocking at a base rate of \qty{2.4}{\giga\hertz} and up to a maximum of \qty{3.7}{\giga\hertz} in turbo mode.

\begin{figure}
    \pgfplotstableread[col sep=comma]{plots/rae_boxplot_data_paper.csv}\datatable
\pgfplotstablegetrowsof{\datatable}
\pgfmathsetmacro{\numrows}{\pgfplotsretval-1}

\colorlet{colIDDSR}{blue}
\colorlet{colMorales}{purple}
\colorlet{colBruninx}{red}
\colorlet{colBertsimas}{orange}
\colorlet{colBertsimasUnstable}{teal!60!cyan}
\colorlet{colHybridMo}{green!55!black}


\begin{tikzpicture}
\begin{axis}[
    name=mainplot, boxplot/draw direction=y,
    xmin=0, xmax=48, ymin=-10, ymax=900,
    xtick={3.5,11.5,19.5,27.5,35.5,43.5},
    xticklabels={1,2,5,10,25,50},
    xlabel={Number of scenarios $m$ in reduced scenario set $\mathcal{J}$},
    ylabel={$\mathrm{RAE}$ in \unit{\percent}},
    width=\linewidth, height=10cm, ymajorgrids=false,
    grid style={gray!30}, xtick pos=bottom, ytick pos=left,
]
\pgfplotsinvokeforeach{0,...,\numrows}{
    \pgfplotstablegetelem{#1}{method}\of\datatable  \let\currentmethod\pgfplotsretval
    \pgfplotstablegetelem{#1}{cluster}\of\datatable \let\currentcluster\pgfplotsretval
    \pgfplotstablegetelem{#1}{min}\of\datatable     \let\currentmin\pgfplotsretval
    \pgfplotstablegetelem{#1}{q1}\of\datatable      \let\currentqone\pgfplotsretval
    \pgfplotstablegetelem{#1}{median}\of\datatable  \let\currentmedian\pgfplotsretval
    \pgfplotstablegetelem{#1}{q3}\of\datatable      \let\currentqthree\pgfplotsretval
    \pgfplotstablegetelem{#1}{max}\of\datatable     \let\currentmax\pgfplotsretval
    \ifnum\pdfstrcmp{\currentmethod}{IDDSR}=0             \def\methodidx{0}\def\methodcolor{colIDDSR}\fi
    \ifnum\pdfstrcmp{\currentmethod}{Morales}=0           \def\methodidx{1}\def\methodcolor{colMorales}\fi
    \ifnum\pdfstrcmp{\currentmethod}{Bruninx}=0           \def\methodidx{2}\def\methodcolor{colBruninx}\fi
    \ifnum\pdfstrcmp{\currentmethod}{Bertsimas}=0         \def\methodidx{3}\def\methodcolor{colBertsimas}\fi
    \ifnum\pdfstrcmp{\currentmethod}{BertsimasUnstable}=0 \def\methodidx{4}\def\methodcolor{colBertsimasUnstable}\fi
    \ifnum\pdfstrcmp{\currentmethod}{HybridMo}=0          \def\methodidx{5}\def\methodcolor{colHybridMo}\fi
    \ifnum\currentcluster=1  \def\clusteridx{0}\fi
    \ifnum\currentcluster=2  \def\clusteridx{1}\fi
    \ifnum\currentcluster=5  \def\clusteridx{2}\fi
    \ifnum\currentcluster=10 \def\clusteridx{3}\fi
    \ifnum\currentcluster=25 \def\clusteridx{4}\fi
    \ifnum\currentcluster=50 \def\clusteridx{5}\fi
    \pgfmathsetmacro{\xpos}{1 + \clusteridx*8 + \methodidx}
    \edef\temp{\noexpand\addplot+[boxplot prepared={lower whisker=\currentmin,
        lower quartile=\currentqone, median=\currentmedian, upper quartile=\currentqthree,
        upper whisker=\currentmax, draw position=\xpos},
        fill=\methodcolor!30, draw=\methodcolor, solid, forget plot,] coordinates {};}
    \temp
}
\addplot[black, thick, dashed, domain=0:48, samples=2, mark=none, forget plot] {0};
\end{axis}
\begin{axis}[
    name=insetplot, at={(mainplot.north east)}, anchor=north east,
    xshift=-0.25cm, yshift=-0.25cm, boxplot/draw direction=y,
    xmin=0, xmax=48, ymin=-2, ymax=22,
    xtick={3.5,11.5,19.5,27.5,35.5,43.5},
    xticklabels={1,2,5,10,25,50},
    xlabel={}, ylabel={}, width=0.73\linewidth, height=7.5cm,
    ymajorgrids=false, grid style={gray!30}, axis background/.style={fill=white},
    tick label style={font=\footnotesize}, xtick pos=bottom, ytick pos=left,
    legend style={at={(0.875,0.95)}, anchor=north, legend columns=1,
        cells={anchor=west}, font=\small, fill=white, fill opacity=0.9,
        draw opacity=1, text opacity=1},
    legend image code/.code={\draw[#1, fill] (0cm,-0.1cm) rectangle (0.4cm,0.1cm);},
]
\addlegendimage{fill=colIDDSR!30, draw=colIDDSR}\addlegendentry{ID~\eqref{eq:c_ID}}
\addlegendimage{fill=colMorales!30, draw=colMorales}\addlegendentry{Mo~\eqref{eq:c_Mo}}
\addlegendimage{fill=colBruninx!30, draw=colBruninx}\addlegendentry{Br~\eqref{eq:c_Br}}
\addlegendimage{fill=colBertsimas!30, draw=colBertsimas}\addlegendentry{Be~\eqref{eq:c_Be}}
\addlegendimage{fill=colBertsimasUnstable!30, draw=colBertsimasUnstable}\addlegendentry{Pr~\eqref{eq:c_Pr}}
\addlegendimage{fill=colHybridMo!30, draw=colHybridMo}\addlegendentry{Hy~\cref{alg:Hybrid}} 
\addlegendimage{line legend, black, thick, dashed}\addlegendentry{SAA}
\pgfplotsinvokeforeach{0,...,\numrows}{
    \pgfplotstablegetelem{#1}{method}\of\datatable  \let\currentmethod\pgfplotsretval
    \pgfplotstablegetelem{#1}{cluster}\of\datatable \let\currentcluster\pgfplotsretval
    \pgfplotstablegetelem{#1}{min}\of\datatable     \let\currentmin\pgfplotsretval
    \pgfplotstablegetelem{#1}{q1}\of\datatable      \let\currentqone\pgfplotsretval
    \pgfplotstablegetelem{#1}{median}\of\datatable  \let\currentmedian\pgfplotsretval
    \pgfplotstablegetelem{#1}{q3}\of\datatable      \let\currentqthree\pgfplotsretval
    \pgfplotstablegetelem{#1}{max}\of\datatable     \let\currentmax\pgfplotsretval
    \ifnum\pdfstrcmp{\currentmethod}{IDDSR}=0             \def\methodidx{0}\def\methodcolor{colIDDSR}\fi
    \ifnum\pdfstrcmp{\currentmethod}{Morales}=0           \def\methodidx{1}\def\methodcolor{colMorales}\fi
    \ifnum\pdfstrcmp{\currentmethod}{Bruninx}=0           \def\methodidx{2}\def\methodcolor{colBruninx}\fi
    \ifnum\pdfstrcmp{\currentmethod}{Bertsimas}=0         \def\methodidx{3}\def\methodcolor{colBertsimas}\fi
    \ifnum\pdfstrcmp{\currentmethod}{BertsimasUnstable}=0 \def\methodidx{4}\def\methodcolor{colBertsimasUnstable}\fi
    \ifnum\pdfstrcmp{\currentmethod}{HybridMo}=0          \def\methodidx{5}\def\methodcolor{colHybridMo}\fi
    \ifnum\currentcluster=1  \def\clusteridx{0}\fi
    \ifnum\currentcluster=2  \def\clusteridx{1}\fi
    \ifnum\currentcluster=5  \def\clusteridx{2}\fi
    \ifnum\currentcluster=10 \def\clusteridx{3}\fi
    \ifnum\currentcluster=25 \def\clusteridx{4}\fi
    \ifnum\currentcluster=50 \def\clusteridx{5}\fi
    \pgfmathsetmacro{\xpos}{1 + \clusteridx*8 + \methodidx}
    \pgfmathsetmacro{\clippedmin}{max(\currentmin,-2)}
    \pgfmathsetmacro{\clippedmax}{min(\currentmax,100)}
    \pgfmathsetmacro{\clippedqone}{max(min(\currentqone,100),-2)}
    \pgfmathsetmacro{\clippedmedian}{max(min(\currentmedian,100),-2)}
    \pgfmathsetmacro{\clippedqthree}{max(min(\currentqthree,100),-2)}
    \edef\temp{\noexpand\addplot+[boxplot prepared={lower whisker=\clippedmin,
        lower quartile=\clippedqone, median=\clippedmedian, upper quartile=\clippedqthree,
        upper whisker=\clippedmax, draw position=\xpos},
        fill=\methodcolor!30, draw=\methodcolor, solid, forget plot,] coordinates {};}
    \temp
}
\addplot[black, thick, dashed, domain=0:48, samples=2, mark=none, forget plot] {0};
\end{axis}
\end{tikzpicture}
    \caption{$\RAE$ for the cost functions $c \in \mathcal{C}$ using the Scenario Reduction Algorithm~\cref{alg:ScenarioReduction} and the Hybrid Algorithm~\cref{alg:Hybrid} with $r=50$ depending on the size $m$ of the reduced scenario set $\mathcal{J}$ for the modified IEEE RTS 24-bus system described in Section~\ref{sec:CaseStudy_24Bus}. Box plots represent variation across 50 randomly sampled distributions $\mathbb{P}_n$ of size $n = 200$, drawn from the probabilistic forecast in~\citet{pinson2013}. The dashed line indicates the SAA solution.}
    \label{fig:IEEE24_quality_draws}
\end{figure}

\subsection{Small case study: IEEE RTS 24-bus system}
\label{sec:CaseStudy_24Bus}

The first case study is a modified version of the IEEE RTS 24-bus system presented in \citet{Ordoudis2016} with 32 dispatchable generating units to be committed \citep{Subcommittee1979}. We add a generator-specific fixed cost term $C^{\mathrm{fix}}$ and modify the linear production costs $C^{\mathrm{L}}$ based on the values presented in the Power Grid Lib~\citep{Babaeinejadsarookolaee2019}. Finally, we slightly perturb all cost coefficients to ensure that each generator has a unique cost function and reduce the potential for multiple optimal solutions.
The power system hosts six intermittent wind power generators at nodes \{3,5,7,16,21,23\}, each with an installed capacity of \SI{350}{\mega\watt}. We assume that the uncertain wind power production can be approximated by the spatially and temporally correlated probabilistic forecast presented in \cite{pinson2013}, which is based on real measurements of wind farms in Denmark.
We apply a uniform load shedding cost $C^{\mathrm{Sh}}$ of \qty{1500}{\$\per\mega\watt\hour} and assume that wind power can be curtailed at no cost.
Finally, we use Gurobi's standard solver settings unless otherwise indicated. In particular, we use a MIP gap of \qty{0.01}{\percent} for the small case study, enabling an unbiased comparison of the quality of the approximation. We leave a detailed analysis of the computational expenses for the large case study in Section~\ref{sec:large_case}

We assess the impact of using the different cost functions $c$ discussed in Section~\ref{sec:CostFunctionMTP} by constructing 50 independent original distributions $\mathbb{P}_n$, each obtained by sampling $n=200$ atoms from the available probabilistic forecast. Figure~\ref{fig:IEEE24_quality_draws} shows the distribution of the $\RAE$ over the draws for different sizes $m$ of the reduced set of scenarios $\mathcal{J}$. The dashed line denotes an $\RAE$ of \qty{0}{\percent} obtained when solving the two-stage stochastic MIP~\eqref{eq:GeneralTwoStageProblem} under distribution $\mathbb{P}_n$, equaling the sample average approximation (SAA).

We first examine the approximation quality of the cost functions discussed in Section~\ref{sec:CostFunctionMTP} before analyzing the results of the Hybrid Algorithm~\cref{alg:Hybrid}.
The first thing we would like to point out is that for a single scenario, i.e., $m=1$, all cost functions except $c^{\mathrm{Pr}}$ yield quite inaccurate reduced sets of scenarios with low approximation quality, as measured by the $\RAE$. Additionally, the approximation quality varies widely depending on the specific sample drawn. For the cost function $c^{\mathrm{Pr}}$, on the other hand, we can observe two things: (1) The numerical simulations confirm the result in Theorem~\ref{th:Theorem1}, i.e., that always the best possible scenario in terms of $\RAE$ is selected, leading to a comparably accurate approximation of the first-stage decisions even for $m=1$, and (2) that because it is guaranteed to select this best scenario independently of the original sample distribution $\mathbb{P}_n$ drawn, it delivers much more robust results when varying this distribution. In fact, it is often possible to achieve an $\RAE$ of around \qty{10}{\percent} with only a single scenario. While Theorem~\ref{th:Theorem1} only considers the case of $m=1$, we can see that the distributions for higher numbers of scenarios produced by the cost function $c^{\mathrm{Pr}}$ are of very good quality when compared to the reduced scenario sets delivered by the rest of the methods.

As shown in the inset plot of Figure~\ref{fig:IEEE24_quality_draws}, all cost functions start delivering distributions with good approximation quality from 25 scenarios onward for most draws. Depending on the desired level of accuracy, however, we find that when using $c^{\mathrm{Pr}}$, the optimal first-stage decisions related to the original distribution $\mathbb{P}_n$ can be approximated with an $\RAE$ of around \qty{2}{\percent} with only 5 scenarios. Notably, we also find that $c^{\mathrm{Mo}}$ for $m>1$ delivers more accurate results than cost functions $c^{\mathrm{Br}}$ and $c^{\mathrm{Be}}$ and is much easier to evaluate (cf. Table~\ref{tab:requirement_cost}).

Overall, the proposed Hybrid Algorithm~\cref{alg:Hybrid} achieves an approximation quality very similar to that of the cost function~$c^{\mathrm{Pr}}$, despite its pre-selection step using the cost function~$c^{\mathrm{Mo}}$ and $ r=50$ scenarios. Most notably, this also holds when the reduced scenario set $\mathcal{J}$ is small. In particular, it indicates that although Theorem~\ref{th:Theorem1} is not satisfied for the original sample of scenarios $\mathcal{I}$, it is sufficient that it holds for the pre-selected set of scenarios $\mathcal{R}$, as the cost function~$c^{\mathrm{Mo}}$ is performing well at that step. While not compromising solution quality, this pre-selection drastically reduces the number of optimization problems to be solved (cf. Table~\ref{tab:OptProbRequirments}) and the corresponding computational expenses. In the following section, we analyze those expenses using a realistic large-scale case study.

\begin{figure}
    \pgfplotstableread[col sep=comma]{plots/rae_boxplot_data_ieee300.csv}\datatable
\pgfplotstablegetrowsof{\datatable}
\pgfmathsetmacro{\numrows}{\pgfplotsretval-1}

\colorlet{colIDDSR}{blue}
\colorlet{colMorales}{purple}
\colorlet{colBruninx}{red}
\colorlet{colBertsimas}{orange}
\colorlet{colBertsimasUnstable}{teal!60!cyan}
\colorlet{colHybridMo}{green!55!black}


\newcommand{\mapmethod}{%
  \def\mi{-1}\def\mc{colIDDSR}
  \ifnum\pdfstrcmp{\cm}{IDDSR}=0             \def\mi{0}\def\mc{colIDDSR}\fi
  \ifnum\pdfstrcmp{\cm}{Morales}=0           \def\mi{1}\def\mc{colMorales}\fi
  \ifnum\pdfstrcmp{\cm}{Bruninx}=0           \def\mi{2}\def\mc{colBruninx}\fi
  \ifnum\pdfstrcmp{\cm}{Bertsimas}=0         \def\mi{3}\def\mc{colBertsimas}\fi
  \ifnum\pdfstrcmp{\cm}{BertsimasUnstable}=0 \def\mi{4}\def\mc{colBertsimasUnstable}\fi
  \ifnum\pdfstrcmp{\cm}{Hybrid-Mo}=0         \def\mi{5}\def\mc{colHybridMo}\fi}
\newcommand{\mapcluster}{%
  \def\ki{0}%
  \ifnum\cl=1 \def\ki{0}\fi  \ifnum\cl=2 \def\ki{1}\fi  \ifnum\cl=5 \def\ki{2}\fi
  \ifnum\cl=10 \def\ki{3}\fi \ifnum\cl=15 \def\ki{4}\fi \ifnum\cl=20 \def\ki{5}\fi
  \ifnum\cl=25 \def\ki{6}\fi}

\begin{tikzpicture}
\begin{axis}[name=mainplot, boxplot/draw direction=y,
    xmin=0, xmax=56, ymin=-2, ymax=115,
    xtick={3.5,11.5,19.5,27.5,35.5,43.5,51.5}, xticklabels={1,2,5,10,15,20,25},
    xlabel={Number of scenarios $m$ in reduced scenario set $\mathcal{J}$},
    ylabel={$\mathrm{RAE}$ in \unit{\percent}},
    width=\linewidth, height=10cm, ymajorgrids=false, xtick pos=bottom, ytick pos=left]
\pgfplotsinvokeforeach{0,...,\numrows}{
    \pgfplotstablegetelem{#1}{method}\of\datatable  \let\cm\pgfplotsretval
    \pgfplotstablegetelem{#1}{cluster}\of\datatable \let\cl\pgfplotsretval
    \pgfplotstablegetelem{#1}{min}\of\datatable     \let\vmin\pgfplotsretval
    \pgfplotstablegetelem{#1}{q1}\of\datatable      \let\vqo\pgfplotsretval
    \pgfplotstablegetelem{#1}{median}\of\datatable  \let\vmd\pgfplotsretval
    \pgfplotstablegetelem{#1}{q3}\of\datatable      \let\vqt\pgfplotsretval
    \pgfplotstablegetelem{#1}{max}\of\datatable     \let\vmax\pgfplotsretval
    \mapmethod \mapcluster
    \ifnum\mi>-1
    \pgfmathsetmacro{\xp}{1 + \ki*8 + \mi}
    \pgfmathsetmacro{\bmin}{max(\vmin,-2)}\pgfmathsetmacro{\bmax}{min(\vmax,114)}
    \pgfmathsetmacro{\bqo}{max(min(\vqo,114),-2)}\pgfmathsetmacro{\bmd}{max(min(\vmd,114),-2)}
    \pgfmathsetmacro{\bqt}{max(min(\vqt,114),-2)}
    \edef\tmp{\noexpand\addplot+[boxplot prepared={lower whisker=\bmin, lower quartile=\bqo,
      median=\bmd, upper quartile=\bqt, upper whisker=\bmax, draw position=\xp},
      fill=\mc!30, draw=\mc, solid, forget plot,] coordinates {};}\tmp
    \fi
}
\addplot[black, thick, dashed, domain=0:56, samples=2, mark=none, forget plot] {0};
\end{axis}
\begin{axis}[name=insetplot, at={(mainplot.north east)}, anchor=north east,
    xshift=-0.25cm, yshift=-0.25cm, boxplot/draw direction=y,
    xmin=0, xmax=56, ymin=-0.5, ymax=8,
    xtick={3.5,11.5,19.5,27.5,35.5,43.5,51.5}, xticklabels={1,2,5,10,15,20,25},
    width=0.73\linewidth, height=7.5cm, ymajorgrids=false, axis background/.style={fill=white},
    tick label style={font=\footnotesize}, xtick pos=bottom, ytick pos=left,
    legend style={at={(-0.15,1.02)}, anchor=north, legend columns=1, cells={anchor=west},
      font=\small, fill=white, fill opacity=0.9, draw opacity=1, text opacity=1},
    legend image code/.code={\draw[#1, fill] (0cm,-0.1cm) rectangle (0.4cm,0.1cm);}]
    \addlegendimage{fill=colIDDSR!30, draw=colIDDSR}\addlegendentry{ID~\eqref{eq:c_ID}}
    \addlegendimage{fill=colMorales!30, draw=colMorales}\addlegendentry{Mo~\eqref{eq:c_Mo}}
    \addlegendimage{fill=colBruninx!30, draw=colBruninx}\addlegendentry{Br~\eqref{eq:c_Br}}
    \addlegendimage{fill=colBertsimas!30, draw=colBertsimas}\addlegendentry{Be~\eqref{eq:c_Be}}
    \addlegendimage{fill=colBertsimasUnstable!30, draw=colBertsimasUnstable}\addlegendentry{Pr~\eqref{eq:c_Pr}}
    \addlegendimage{fill=colHybridMo!30, draw=colHybridMo}\addlegendentry{Hy~\cref{alg:Hybrid}} 
    \addlegendimage{line legend, black, thick, dashed}\addlegendentry{SAA}
\pgfplotsinvokeforeach{0,...,\numrows}{
    \pgfplotstablegetelem{#1}{method}\of\datatable  \let\cm\pgfplotsretval
    \pgfplotstablegetelem{#1}{cluster}\of\datatable \let\cl\pgfplotsretval
    \pgfplotstablegetelem{#1}{min}\of\datatable     \let\vmin\pgfplotsretval
    \pgfplotstablegetelem{#1}{q1}\of\datatable      \let\vqo\pgfplotsretval
    \pgfplotstablegetelem{#1}{median}\of\datatable  \let\vmd\pgfplotsretval
    \pgfplotstablegetelem{#1}{q3}\of\datatable      \let\vqt\pgfplotsretval
    \pgfplotstablegetelem{#1}{max}\of\datatable     \let\vmax\pgfplotsretval
    \mapmethod \mapcluster
    \ifnum\mi>-1
    \pgfmathsetmacro{\xp}{1 + \ki*8 + \mi}
    \pgfmathsetmacro{\bmin}{max(\vmin,-0.5)}\pgfmathsetmacro{\bmax}{min(\vmax,8)}
    \pgfmathsetmacro{\bqo}{max(min(\vqo,8),-0.5)}\pgfmathsetmacro{\bmd}{max(min(\vmd,8),-0.5)}
    \pgfmathsetmacro{\bqt}{max(min(\vqt,8),-0.5)}
    \edef\tmp{\noexpand\addplot+[boxplot prepared={lower whisker=\bmin, lower quartile=\bqo,
      median=\bmd, upper quartile=\bqt, upper whisker=\bmax, draw position=\xp},
      fill=\mc!30, draw=\mc, solid, forget plot,] coordinates {};}\tmp
    \fi
}
\addplot[black, thick, dashed, domain=0:56, samples=2, mark=none, forget plot] {0};
\end{axis}
\end{tikzpicture}
    \caption{$\RAE$ for the cost functions $c \in \mathcal{C}$ using the Scenario Reduction Algorithm~\cref{alg:ScenarioReduction} and the Hybrid Algorithm~\cref{alg:Hybrid} with $r=50$ depending on the size $m$ of the reduced scenario set $\mathcal{J}$ for the modified IEEE 300-bus system described in Section~\ref{sec:CaseStudy_24Bus}. Box plots represent variation across 10 randomly sampled distributions $\mathbb{P}_n$ of size $n = 1000$, drawn from the probabilistic forecast in~\citet{pinson2013}. The dashed line indicates the SAA solution.}
    \label{fig:IEEE300_quality_draws}
\end{figure}

\subsection{Large case study: IEEE 300-bus system}
\label{sec:large_case}

The large case study is a modified version of the IEEE 300-bus system presented in the Power Grid Lib~ \citep{Babaeinejadsarookolaee2019}, where we enrich the dispatchable generators with the technical characteristics of the RTS-GMLC~\citep{Barrows2020,rtsgmlc_repo} to represent unit commitment and split them into 170 individual committable units with \SI{35}{\giga\watt} of capacity in total. We further add 17 intermittent wind power generators with \SIrange{600}{3000}{\mega\watt} installed capacity, totaling \SI{26.6}{\giga\watt} and again utilize the probabilistic forecast from \cite{pinson2013}. The uniform load shedding costs $C^{\mathrm{Sh}}$ are assumed to be \qty{3000}{\$\per\mega\watt\hour}.
Due to the size of the problems, we increase the MIP gap to \qty{1}{\percent} for all MIPs solved.

\subsubsection{Quality of approximation}
Figure~\ref{fig:IEEE300_quality_draws} shows a comparison of the RAE for different sizes $m$ of the reduced scenario set $\mathcal{J}$ across 10 random draws with $n=1000$ scenarios. For the Hybrid Algorithm~\cref{alg:Hybrid}, we use $r=50$ pre-selected scenarios in Phase 1, which we find to perform well and robustly across draws. While the results generally confirm the findings from the small case study, the overall RAE and its deviations are slightly smaller due to the increased flexibility of a larger power system in balancing fluctuations. Two things are, however, again worth noting for the cost function $c^{\mathrm{Pr}}$: First, the scenario sets found exhibit less variance in approximation quality over different samples drawn. Secondly, the first scenario drawn when using the cost function $c^{\mathrm{Pr}}$ leads to a mean RAE of only around \qty{2.4}{\percent}, leading up to \qty{0.4}{\percent} with only $m=5$ scenarios in the reduced set. We also point out that the Hybrid Algorithm~\cref{alg:Hybrid} achieves an approximation quality essentially indistinguishable from that of $c^{\mathrm{Pr}}$, despite requiring far less computation to evaluate, which we discuss in detail next.

\subsubsection{Computational expenses}
\label{sec:large_case_comp_exp}
We have tested several configurations to minimize wall-clock time on the hardware required to solve the optimization problems for each cost function shown in Table~\ref{tab:OptProbRequirments} and present the best ones in~\ref{sec:app_configs}.

Table~\ref{tab:requirement_cost} shows the mean and standard deviation, taken over different draws, of Gurobi's work units (left) and the wall-clock time (right) required to solve the optimization problems in Table~\ref{tab:OptProbRequirments} for the different cost functions and the Hybrid Algorithm~\cref{alg:Hybrid}.

When looking at the work units on the left, one can clearly observe that solving $n$ MIP and $n^2-n$ LP problems for $n = 1000$ becomes quite expensive. It is, however, possible to calculate all those problems in parallel, even across machines, with barely any overhead. When running the problems in parallel on 192 cores, the wall-clock times can be substantially reduced, as shown on the right side of Table~\ref{tab:requirement_cost}.
Notably, the work units and wall-clock times used to calculate the requirements vary little across the originally drawn samples, as indicated by the small standard deviation. 

One thing that becomes clear immediately is that the cost functions $c^{\mathrm{Be}}$ and $c^{\mathrm{Pr}}$ are much more expensive to evaluate than the others, even with massive parallelization. For $n=1000$ scenarios in particular, solving the second-stage LP problems becomes computationally expensive. This problem is overcome by our proposed Hybrid Algorithm~\eqref{alg:Hybrid}, which drastically limits the number of MIP and LP problems by performing the pre-selection with cost function $c^{\mathrm{Mo}}$, reducing the set of scenarios to be evaluated with $c^{\mathrm{Pr}}$ from $n=1000$ to $r=50$ in Phase 2. Accordingly, it requires 18 times less wall-clock time and 66 times less work (as measured by Gurobi) than using only the proposed cost function $c^{\mathrm{Pr}}$.

    \begin{table}
  \centering
  \footnotesize\setlength{\tabcolsep}{3pt}
   \begin{threeparttable}
  \caption{Mean and standard deviation (in brackets) of Gurobi's work units and wall-clock time required for solving the optimization problems in \textit{Step~1} of Algorithm~\cref{alg:ScenarioReduction} and the Hybrid Algorithm~\cref{alg:Hybrid} with $r=50$ for the modified IEEE 300-bus system described in Section~\ref{sec:large_case} with $n=1000$ scenarios over 10 randomly sampled distributions $\mathbb{P}_n$.} 
  \label{tab:requirement_cost}
  \begin{tabular}{lrrrrrr}
  \toprule
  & \multicolumn{3}{c}{Work units} & \multicolumn{3}{c}{Wall-clock in \unit{\second}} \\
  \cmidrule(lr){2-4}\cmidrule(lr){5-7}
  Cost func. $c$  & MILP & LP & Total & MILP & LP & Total \\
  \midrule
  ID~\eqref{eq:c_ID} & -- & -- & -- & -- & -- & -- \\
  Mo~\eqref{eq:c_Mo} & 56.0 (28.2) & 205.0 (0.9) & 260.9 (28.7) & 33.5 (14.5) & 22.6 (5.2) & 56.1 (16.9) \\
  Br~\eqref{eq:c_Br} & 46\,391.2 (243.4) & -- & 46\,391.2 (243.4) & 595.6 (58.2) & -- & 595.6 (58.2) \\
  Be~\eqref{eq:c_Be} & 46\,391.2 (243.4) & 153\,119.0 (917.9) & 199\,510.2 (1\,060.5) & 595.6 (58.2) & 2\,093.9 (131.7) & 2\,689.5 (185.3) \\
  Pr~\eqref{eq:c_Pr} & 46\,391.2 (243.4) & 153\,119.0 (917.9) & 199\,510.2 (1\,060.5) & 595.6 (58.2) & 2\,093.9 (131.7) & 2\,689.5 (185.3) \\
  Hy~\cref{alg:Hybrid}\tnote{a} & 2\,396.5 (91.8) & 624.1 (15.9) & 3\,020.6 (84.9) & 95.6 (15.5) & 50.6 (5.0) & 146.2 (18.1) \\
  \bottomrule
  \end{tabular}
  \begin{tablenotes}\footnotesize
  \item[a] Columns show aggregated values for Phase~1 and Phase~2.
  \end{tablenotes}
  \end{threeparttable}
\end{table}

To fairly compare the different scenario reduction methods in terms of computational complexity, we further compare the combined effort for performing the scenario reduction and solving the reduced two-stage stochastic MIP problem. For that, we also utilize warm-starting of the MIPs as elaborated in~\ref{sec:app_configs}.
Figure~\ref{fig:agg_time_comp} compares these combined wall-clock times (mean over draws) for different sizes of the reduced scenario set $\mathcal{J}$. 
While wall-clock times for the reduced two-stage stochastic MIPs are similar across scenario reduction methods for a given $m$, the combined times differ substantially. 
One can immediately notice that for measures $c^{\mathrm{Br}}, c^{\mathrm{Be}},$ and $c^{\mathrm{Pr}}$, performing the scenario reduction is way more expensive than solving the corresponding reduced two-stage stochastic MIPs for $m<15$. In fact, it takes until $m=20$ for methods $c^{\mathrm{ID}}, c^{\mathrm{Mo}}$, and the Hybrid Algorithm~\cref{alg:Hybrid} to match the total wall-clock time of methods $c^{\mathrm{Be}}$ and $c^{\mathrm{Pr}}$ for $m=1$. This highlights that while the proposed cost function $c^{\mathrm{Pr}}$ yields very good reduced distributions, it takes substantially more time than solving the reduced two-stage MIPs, potentially rendering it impractical. This drawback is overcome by our proposed Hybrid Algorithm~\cref{alg:Hybrid}, which only slightly increases combined wall-clock times compared to the cost functions $c^{\mathrm{ID}}$ and $c^{\mathrm{Mo}}$. Hence, we will take a more detailed look at individual performance by jointly considering computational expenses and solution quality.

\subsubsection{Combined evaluation}

Taking the quality of the solution shown in Figure~\ref{fig:IEEE300_quality_draws} and the analysis of the computational expenses jointly into account, we highlight a few points:
1) Function $c^{\mathrm{Pr}}$ outperforms $c^{\mathrm{Be}}$ on the quality of approximation, while having the same computational expenses for evaluating the distance and faster solution times of the reduced two-stage MIPs. 2) When comparing the combined wall-clock times, across a number of scenarios to achieve a given solution quality, e.g., an RAE of \qty{1}{\percent}, cost function $c^{\mathrm{Mo}}$ for $m=15$ achieves a better solution in less time compared to $c^{\mathrm{Pr}}$ with $m=5$ scenarios, due to the long times it takes to calculate the requirements -- even including heavy parallelization. 3) The Hybrid Algorithm~\cref{alg:Hybrid} achieves the same approximation quality as $c^{\mathrm{Pr}}$ with way less computational expenses. As a consequence, it also outperforms the cost function $c^{\mathrm{Mo}}$. 
The proposed Hybrid Algorithm~\cref{alg:Hybrid} therefore provides an accurate and fast method for scenario reduction in two-stage stochastic MIPs.

\begin{figure}
\pgfplotstableread[col sep=comma]{plots/cost_stack_ow_ieee300.csv}\datatable
\pgfplotstablegetrowsof{\datatable}
\pgfmathsetmacro{\numrows}{\pgfplotsretval-1}

\colorlet{colIDDSR}{blue}
\colorlet{colMorales}{purple}
\colorlet{colBruninx}{red}
\colorlet{colBertsimas}{orange}
\colorlet{colBertsimasUnstable}{teal!60!cyan}
\colorlet{colHybridMo}{green!55!black}


\begin{tikzpicture}
\begin{axis}[
    ymin=0, ymax=8500, xmin=0, xmax=56,
    ytick={0,2000,4000,6000,8000},
    xtick={3.5,11.5,19.5,27.5,35.5,43.5,51.5}, xticklabels={1,2,5,10,15,20,25},
    xlabel={Number of scenarios $m$ in reduced scenario set $\mathcal{J}$},
    ylabel={Wall-clock time [\unit{\second}]},
    width=\linewidth, height=10cm, xtick pos=bottom, ytick pos=left,
    ymajorgrids=false, grid style={gray!25},
    legend style={at={(0.02,0.97)}, anchor=north west, legend columns=1, cells={anchor=west},
                  font=\small, fill=white, fill opacity=0.9, draw opacity=1, text opacity=1},
    legend image code/.code={\draw[#1, fill] (0cm,-0.1cm) rectangle (0.35cm,0.1cm);},
]
    \addlegendimage{fill=colIDDSR, draw=colIDDSR}\addlegendentry{ID~\eqref{eq:c_ID}}
    \addlegendimage{fill=colMorales, draw=colMorales}\addlegendentry{Mo~\eqref{eq:c_Mo}}
    \addlegendimage{fill=colBruninx, draw=colBruninx}\addlegendentry{Br~\eqref{eq:c_Br}}
    \addlegendimage{fill=colBertsimas, draw=colBertsimas}\addlegendentry{Be~\eqref{eq:c_Be}}
    \addlegendimage{fill=colBertsimasUnstable, draw=colBertsimasUnstable}\addlegendentry{Pr~\eqref{eq:c_Pr}}
    \addlegendimage{fill=colHybridMo, draw=colHybridMo}\addlegendentry{Hy~\cref{alg:Hybrid}} 
    \addlegendimage{fill=black,draw=black}                               \addlegendentry{Compute requirements}
\addlegendimage{fill=black!30,draw=black}                            \addlegendentry{Solve stochastic MIP~\eqref{eq:TwoStageOptSolQ}}

\pgfplotsinvokeforeach{0,...,\numrows}{
    \pgfplotstablegetelem{#1}{method}\of\datatable    \let\m\pgfplotsretval
    \pgfplotstablegetelem{#1}{cluster}\of\datatable   \let\c\pgfplotsretval
    \pgfplotstablegetelem{#1}{solve}\of\datatable     \let\sv\pgfplotsretval
    \pgfplotstablegetelem{#1}{total}\of\datatable     \let\tot\pgfplotsretval
    \pgfplotstablegetelem{#1}{reqs_lo}\of\datatable   \let\rlo\pgfplotsretval
    \pgfplotstablegetelem{#1}{reqs_hi}\of\datatable   \let\rhi\pgfplotsretval
    \pgfplotstablegetelem{#1}{solve_lo}\of\datatable  \let\slo\pgfplotsretval
    \pgfplotstablegetelem{#1}{solve_hi}\of\datatable  \let\shi\pgfplotsretval
    \ifnum\pdfstrcmp{\m}{IDDSR}=0             \def\mi{0}\def\mc{colIDDSR}\fi
    \ifnum\pdfstrcmp{\m}{Morales}=0           \def\mi{1}\def\mc{colMorales}\fi
    \ifnum\pdfstrcmp{\m}{Bruninx}=0           \def\mi{2}\def\mc{colBruninx}\fi
    \ifnum\pdfstrcmp{\m}{Bertsimas}=0         \def\mi{3}\def\mc{colBertsimas}\fi
    \ifnum\pdfstrcmp{\m}{BertsimasUnstable}=0 \def\mi{4}\def\mc{colBertsimasUnstable}\fi
    \ifnum\pdfstrcmp{\m}{Hybrid-Mo}=0         \def\mi{5}\def\mc{colHybridMo}\fi
    \def\ci{0}%
    \ifnum\c=1 \def\ci{0}\fi  \ifnum\c=2 \def\ci{1}\fi  \ifnum\c=5 \def\ci{2}\fi
    \ifnum\c=10 \def\ci{3}\fi \ifnum\c=15 \def\ci{4}\fi \ifnum\c=20 \def\ci{5}\fi \ifnum\c=25 \def\ci{6}\fi
    \pgfmathsetmacro{\xp}{1 + \ci*8 + \mi}
    \pgfmathsetmacro{\xl}{\xp-0.45}\pgfmathsetmacro{\xr}{\xp+0.45}
    \edef\tempa{\noexpand\draw[draw=\mc, fill=\mc!30] (axis cs:\xl,0) rectangle (axis cs:\xr,\sv);}\tempa
    \edef\tempb{\noexpand\draw[draw=\mc, fill=\mc] (axis cs:\xl,\sv) rectangle (axis cs:\xr,\tot);}\tempb
    \pgfmathsetmacro{\cw}{0.16}\pgfmathsetmacro{\xa}{\xp-\cw}\pgfmathsetmacro{\xb}{\xp+\cw}
    \pgfmathsetmacro{\rloy}{\sv+\rlo}\pgfmathsetmacro{\rhiy}{\sv+\rhi}
    \edef\temps{\noexpand\draw[black, line width=0.4pt]
        (axis cs:\xp,\slo) -- (axis cs:\xp,\shi)
        (axis cs:\xa,\slo) -- (axis cs:\xb,\slo) (axis cs:\xa,\shi) -- (axis cs:\xb,\shi);}\temps
}
\end{axis}
\end{tikzpicture}
  \caption{Wall-clock times in \protect\si{\second} (mean over draws) required for solving the optimization problems in \textit{Step~1} of Algorithm~\protect\ref{alg:ScenarioReduction} and Phases 1 and 2 of Hybrid Algorithm~\protect\cref{alg:Hybrid} and solving the reduced two-stage stochastic MIP~\protect\eqref{eq:TwoStageOptSolQ} for the modified IEEE 300-bus system described in Section~\protect\ref{sec:large_case} with $n=1000$ scenarios. Whiskers indicate variation over draws of wall-clock time for computing the requirements.}
  \label{fig:agg_time_comp}
\end{figure}

\section{Summary and conclusions}
\label{sec:conclusions}
In this paper, we revisit the classical scenario reduction theory for stochastic programming, which measures distribution similarity using the optimal mass transportation problem. For that, we analyze several optimization-problem-driven cost functions suggested in the literature and propose a new one. When combining the proposed cost function~\eqref{eq:c_Pr} with the Forward Selection Algorithm~\citep{Dupacova2003} to find reduced sets of scenarios, we state and prove a theorem that guarantees the first scenario drawn from the initial sample distribution is optimal with respect to the RAE. We further present a novel hybrid algorithm that uses the cost function of \citet{Morales2009} for a fast initial pruning of the scenario set, followed by a refined scenario selection based on the proposed cost function applied to the pre-selected subset.

To evaluate the performance of cost functions and the hybrid algorithm for scenario reduction, we use a classical two-stage stochastic MIP problem that has been extensively analyzed in the literature: the stochastic unit commitment problem. Based on a small 24-bus and a large 300-bus system, with 200 and 1000 scenarios in the initial distribution, respectively, we find that the proposed cost function outperforms the others in terms of solution quality. In particular, with only one scenario, it achieves relative approximation errors of \qty{11.3}{\percent} and \qty{2.4}{\percent} for the small and large cases, respectively. Furthermore, the proposed cost function more robustly selects good scenarios as the initial probability distribution varies, whereas the other cost functions exhibit large variations. Finding accurate scenario sets supported by only a few atoms is particularly crucial in stochastic MIPs, as solution times increase drastically with the number of scenarios.

The proposed cost function is, however, computationally expensive to evaluate, undermining the trade-off when comparing overall wall-clock times to the cost function of \citet{Morales2009}, which quickly finds decent reduced scenario sets.
This drawback is overcome by our proposed Hybrid Algorithm~\cref{alg:Hybrid}, which reduces wall-clock time by a factor of 18 and work (as measured by Gurobi) by a factor of 66 compared to using the proposed cost function, without compromising solution accuracy. The algorithm therefore provides an excellent method for scenario reduction for two-stage stochastic MIPs.

Future work should focus on extending the proposed cost function and the hybrid algorithm to multi-stage stochastic MIPs and investigating their performance for settings where the second stage is nonconvex, e.g., because it itself contains integer variables. For cases where the second stage is convex, one may analyze if and how duality information can be leveraged to further improve the cost functions. Finally, it should be analyzed how best to use information from the scenario reduction process to derive a lower bound for the stochastic MIP related to the full distribution, thereby guaranteeing the overall performance of the scenario reduction process.

\appendix
\section{Stochastic unit commitment problem}
\label{sec:app_SUC}

As an exemplary application of the general two-stage stochastic MIP Problem~\eqref{eq:GeneralTwoStageProblem}, we use the well-known two-stage SUC problem with uncertain renewable power production and follow the formulation presented by~\citet{Carrion2006} and \citet{Blanco2017}.
In this context, first-stage variables $x$ are the binary commitment decisions of dispatchable thermal units, which are subject to a set of technical constraints $\mathcal{X}$, e.g, minimum up- and downtimes, chosen to minimize a piece-wise linear cost function that takes, e.g., start-up and fixed costs into account. The second-stage problem minimizes the cost of operating the power system, ensuring technical feasibility, e.g., by restricting power production from generators or maximum line power flows, after observing the realization of uncertain renewable power production~$h^{\xi_i}$. 
In the following, we introduce the optimization problem and use lower-case letters for variables and upper-case letters for parameters, unless otherwise indicated.

Let $n, m \in \mathcal{N}$, $g \in \mathcal{G}$, $j \in \mathcal{J}$, and $t \in \mathcal{T} = \{1,2,...,T\}$ denote the set of nodes (buses), dispatchable generators, wind power generators, and time periods, respectively. Furthermore, let $\omega \in \Omega$ denote the set of wind power scenarios with associated power production~$\widetilde{W}_{j,t,\omega}$.

We use $T^{U^0}_g$ to denote the number of time steps that generator $g$ has already been operating in its state $U^0_g$ at $t = 1$ and $\mathcal{T}^{0}_g$ for the number of time steps that a generator $g$ has to remain in its initial state to account for minimum up and down times $\UT$ and $\DT$, respectively, as:
\begin{equation}
    T^{0}_g =
    \begin{cases}
        \min (T, \UT_g - T^{U^0}_g) & \text{if}~U^0_g = 1, \\
        \min (T, \DT_g - T^{U^0}_g) & \text{if}~U^0_g = 0.
    \end{cases}
\end{equation}

For example, if a generator $g'$ has been offline for 8 time steps prior to $t = 1$, such that $U^0_g = 0$ and $T^{U^0}_{g'} = 8$, and its minimum down time is $\DT_{g'} = 12$ then $T^{0}_g = 4$ and $\mathcal{T}^{0}_{g'} = \{1,2,3,4\}$.

The goal of the unit commitment problem is to optimally choose commitment decisions $u_{g,t}$ (on/off), start-ups $y_{g,t}$. and shut-downs $z_{g,t}$ for each dispatchable generator $g$ and time step $t$. Here, the objective function of the first-stage problem is given by:
\begin{equation}
    \sum_{t \in \mathcal{T}} \sum_{g \in \mathcal{G}} \left( C_g^{\mathrm{SU}} y_{g,t} + C_g^{\mathrm{fix}} u_{g,t} \right), \label{eq:FirstStage_Objective}
\end{equation}
where $C_g^{\mathrm{SU}}$ and $C_g^{\mathrm{fix}}$ refer to the start-up and fix costs, respectively.

The feasible region of the first-stage problem is given by:
\begin{subequations}\label{eq:FirstStage_feasible_region}
\begin{align}
& y_{g,t} - z_{g,t} = u_{g,t} - u_{g,t-1}, & & \forall g , \forall t \in \{2,...,T\}, \label{eq:FirstStage_state_change} \\
& y_{g,t} - z_{g,t} = u_{g,t} - U_g^0, & & \forall g, t = 1, \label{eq:FirstStage_state_change_init} \\
& y_{g,t} + z_{g,t} \leq 1, & & \forall g , \forall t \in \mathcal{T}, \label{eq:FirstStage_state_change_single} \\
& u_{g,t} = U_g^0 , & & \forall g , \forall t \in \mathcal{T}^0_g, \label{eq:FirstStage_state_initial}  \\
& \sum_{\tau = t - \mathrm{UT}_g + 1}^t y_{g,t} \leq u_{g,t}, & & \forall g, \forall t \in \mathcal{T} \setminus \mathcal{T}^0_g, \label{eq:FirstStage_min_up_time} \\
& \sum_{\tau = t - \mathrm{DT}_g + 1}^t z_{g,t} \leq 1 - u_{g,t}, & & \forall g, \forall t \in \mathcal{T} \setminus \mathcal{T}^0_g, \label{eq:FirstStage_min_down_time}
\end{align}
\end{subequations}
where $x = \{  (u_{g,t}, y_{g,t}, z_{g,t}), \forall g, \forall t\}$. Constraints~\eqref{eq:FirstStage_state_change}--\eqref{eq:FirstStage_state_change_init} relate start-ups and shut-downs to on/off states, Constraints~\eqref{eq:FirstStage_state_change_single} forbid simultaneous start-up and shut-down, Constraints~\eqref{eq:FirstStage_state_initial} enforce the initial state to an exogenously given one $U_g^0$, and Constraints~\eqref{eq:FirstStage_min_up_time}--\eqref{eq:FirstStage_min_down_time} define minimum up and down times, respectively.

The objective function for the second-stage problem of the SUC problem for each scenario $\omega \in \Omega$ is given by:
\begin{equation}\label{eq:SecondStage_objective}
   \sum_{t \in \mathcal{T}} \left( \sum_{g \in \mathcal{G}} C_g^{\mathrm{L}} p_{g,t,\omega} + \sum_{d \in \mathcal{D}} C^{\mathrm{Sh}} l^{\mathrm{Sh}}_{d,t,\omega} + \sum_{n \in \mathcal{N}} C_n^{\mathrm{Sl}} l^{\mathrm{Sl}}_{n,t,\omega} \right),
\end{equation}
where$p_{g,t,\omega}$ is the power production of dispatchable generator $g$ in time step $t$ and scenario $\omega$ at cost $C_g^{\mathrm{L}}$, $l^{\mathrm{Sh}}_{d,t,\omega}$ is the load shedding of demand $d$ at uniform cost $C^{\mathrm{Sh}}$, and $l^{\mathrm{Sl}}_{n,t,\omega}$ is a nodal excess slack demand that ensures feasibility for every first-stage unit commitment decision at cost $C_d^{\mathrm{Sl}}$, which we assume to be equal to the uniform load shedding cost $C^{\mathrm{Sh}}$.

The feasible region for every scenario $\omega \in \Omega$ is given by:
\begin{subequations}\label{eq:SecondStage_constraints}
\allowdisplaybreaks
\begin{align}
& P_g^{\min} u_{g,t} \leq p_{g,t,\omega} \leq P_g^{\max} u_{g,t}, & & \forall g, \forall t \in \mathcal{T}, \label{eq:SecondStage_prod_limit} \\
& -\RD_g \leq p_{g,t,\omega} - p_{g,t-1,\omega}, & & \forall g, \forall t \in \{2,...,T\}, \label{eq:SecondStage_ramp_limit_down} \\
& p_{g,t,\omega} - p_{g,t-1,\omega} \leq \RU_g, & & \forall g, \forall t \in \{2,...,T\}, \label{eq:SecondStage_ramp_limit_up} \\
& (P_g^{0} + \RD_g) u_{g,t} \leq p_{g,t,\omega}, & & \forall g, t = 1, \label{eq:SecondStage_ramp_limit_init_down} \\
& p_{g,t,\omega} \leq (P_g^{0} + \RU_g) u_{g,t}, & & \forall g, t = 1, \label{eq:SecondStage_ramp_limit_init_up} \\
& 0 \leq l^{\mathrm{Sh}}_{d,t,\omega} \leq L_{d,t}, & & \forall d, \forall t \in \mathcal{T}, \label{eq:SecondStage_shed_limit} \\
& 0 \leq w^{\mathrm{Sp}}_{j,t,\omega} \leq W_{j,t,\omega}, & & \forall j, \forall t \in \mathcal{T}, \label{eq:SecondStage_spill_limit} \\
& f_{n,m,t,\omega} = B_{n,m} (\theta_{n,t,\omega} - \theta_{m,t,\omega}), & & \forall n, \forall m \in \mathcal{N}_n, \forall t \in \mathcal{T}, \label{eq:SecondStage_flow_def} \\
& -\mathrm{F}_{n,m}^{\max} \leq f_{n,m,t,\omega} \leq \mathrm{F}_{n,m}^{\max}, & & \forall n, \forall m \in \mathcal{N}_n, \forall t \in \mathcal{T},\label{eq:SecondStage_line_limit} \\
& \sum_{g \in \mathcal{G}_n} p_{g,t,\omega}  - \sum_{m \in \mathcal{N}_n} f_{n,m,t,\omega} - & &  \nonumber \\
& l^{\mathrm{Sl}}_{n,t,\omega} + \sum_{j \in \mathcal{J}_n} (\widetilde{W}_{j,t,\omega} - w^{\mathrm{Sp}}_{j,t,\omega}) - & & \nonumber \\
& \sum_{d \in \mathcal{D}_n} (L_{d,t} - l^{\mathrm{Sh}}_{d,t,\omega})  = 0, & & \forall n, \forall t \in \mathcal{T}, \label{eq:SecondStage_nodal_bal}
\end{align}
\end{subequations}
where Constraints~\eqref{eq:SecondStage_prod_limit} limit power production of dispatchable generator $g$ in time step $t$ and scenario $\omega$ to be between the minimum and maximum technical limits $P_g^{\min}$ and $P_g^{\max}$, respectively. Constraints~\eqref{eq:SecondStage_ramp_limit_down}--\eqref{eq:SecondStage_ramp_limit_init_up} restrict their ramping capabilities to between a maximum ramp-down and ramp-up limit $\RD_g$ and $\RU_g$, respectively. Load shedding $l^{\mathrm{Sh}}_{d,t,\omega}$ of demand $d$ is restricted to be below the exogenously given inflexible load $L_{d,t}$ by Constraints~\eqref{eq:SecondStage_shed_limit}. Constraints~\eqref{eq:SecondStage_spill_limit} limit the amount of wind power spillage $w^{\mathrm{Sp}}_{j,t,\omega}$ to be below the respective forecast in scenario $\omega$, $\widetilde{W}_{j,t,\omega}$. We use a loss-less DC power flow approximation to define line flows $f_{n,m,t,\omega}$ based on line susceptance $B_{n,m}$ and nodal voltage angles $\theta_{n,t,\omega}$ through Constraints~\eqref{eq:SecondStage_flow_def}. Symmetric line flow limits are enforced by Constraints~\eqref{eq:SecondStage_line_limit} and nodal power balance is ensured by Constraints~\eqref{eq:SecondStage_nodal_bal}.

The second-stage operational problem of the SUC for every scenario $\omega \in \Omega$ is then given by:
\begin{equation}\label{eq:SecondStage}
    \min_{} \eqref{eq:SecondStage_objective}, \mathrm{s.t.} \eqref{eq:SecondStage_constraints}.
\end{equation}
Note that, depending on whether the second-stage problem is solved without the first-stage problem, $u_{g,t}$ will be a parameter, not a variable.

The full two-stage SUC problem is then given by:
\begin{equation}\label{eq:TwoStageUnitCommitment}
\min_{x \in \eqref{eq:FirstStage_feasible_region}} \eqref{eq:FirstStage_Objective} + \mathbb{E}_{\omega}[\eqref{eq:SecondStage}] = \min_{x \in \eqref{eq:FirstStage_feasible_region}} \eqref{eq:FirstStage_Objective} + \sum_{\omega \in \Omega} \pi_\omega \cdot \eqref{eq:SecondStage}, 
\end{equation}
where the equality follows from the assumption that $\omega$ follows a finite discrete distribution where $\pi_\omega$ denotes the probability of scenario $\omega$.

\section{Proof of Theorem 1}
\label{sec:app_proof}

We first note that the term $z^*(\mathbb{P}_n)$ in the definition of the $\RAE$ in~\eqref{eq:RAE} only depends on the distribution $\mathbb{P}_n$ and not on any set of scenarios $\mathcal{J}$ drawn from it. Consequently, the solution $x^*(\mathbb{Q}_m)$, defined by Equation~\eqref{eq:TwoStageOptSolQ}, which minimizes the first term in the numerator, also minimizes the $\RAE$. Secondly, there exists only one unique probability distribution carried on a single atom: the Dirac measure that places unit probability mass on that same atom, i.e.,  $\mathbb{Q}_1^j = \delta_{\xi_j}, \forall j \in \mathcal{I}$. Now observe that in this case whenever the distribution of $\xi$ is supported solely on a single scenario $\xi_j$, such that $\xi \sim \mathbb{Q}_1^j = \delta_{\xi_j}$ and $p_j = 1$, the general two-stage stochastic problem~\eqref{eq:GeneralTwoStageProblem} and the single-scenario problem~\eqref{eq:GeneralSingleScenarioDP} are identical. Therefore, their optimal solutions $x^*(\mathbb{Q}_1^j)$ and $x^*(\xi_j)$, defined by Equations~\eqref{eq:TwoStageOptSol} and \eqref{eq:SingleScenarioOptSol}, respectively, must also be identical, i.e., $x^*(\mathbb{Q}_1^j) = x^*(\xi_j)$. Based on those points and assuming\footnote{While we find this assumption to be naturally fulfilled in most problems, it can always be ensured by adding a large enough positive constant to the objective function.} that $z^*(\mathbb{P}_n) > 0$, we derive:
\begin{align}
    \mathbb{Q}_1^{j^*} &= \arg\min_{\mathbb{Q}_1^j \in \hat{\mathcal{Q}}_1} \RAE(\mathbb{P}_n, \mathbb{Q}_1^j) \nonumber \\
    &= \arg\min_{\mathbb{Q}_1^j \in \hat{\mathcal{Q}}_1} \frac{z(x^*(\mathbb{Q}_1^j),\mathbb{P}_n) - z^*(\mathbb{P}_n)}{z^*(\mathbb{P}_n)} \nonumber \\
    &= \arg\min_{\mathbb{Q}_1^j \in \hat{\mathcal{Q}}_1} z(x^*(\mathbb{Q}_1^j),\mathbb{P}_n) \nonumber\\
    &= \arg\min_{j \in \mathcal{I}} z(x^*(\xi_j),\mathbb{P}_n) \nonumber \\
    &= \arg\min_{j \in \mathcal{I}} \sum_{i \in \mathcal{I}} p_i z(x^*(\xi_j),\xi_i), \label{eq:RAEReformulationZeta}
\end{align}
where the last equivalence follows from Equation~\eqref{eq:SecondStageEvaluationDistribution}.

Now taking a look at the first scenario drawn from $\mathcal{I}$ using the Forward Selection Algorithm~\cref{alg:ForwardSelection}, such that $\mathcal{J} = \{\}$ and $\mathcal{R} = \mathcal{I}$, Line~4 can be rewritten as:
\begin{align}
    j &= \arg\min_{j' \in \mathcal{R}} \sum_{i \in \mathcal{R} \setminus \{ j' \}} p_i \min_{j'' \in \mathcal{J} \cup \{j'\} } c(\xi_i,\zeta_{j''}) \nonumber\\
    &= \arg\min_{j' \in \mathcal{I}} \sum_{i \in \mathcal{I} \setminus \{ j' \}} p_i \min_{j'' \in \{\} \cup \{j'\} } c(\xi_i,\zeta_{j''}) \nonumber \\
    &= \arg\min_{j' \in \mathcal{I}} \sum_{i \in \mathcal{I} \setminus \{ j' \}} p_i c(\xi_i,\zeta_{j'}) \nonumber \\
    &= \arg\min_{j' \in \mathcal{I}} \sum_{i \in \mathcal{I}} p_i c(\xi_i,\zeta_{j'}), \label{eq:FSReformulation}
\end{align}
where the last equivalence follows from $c(\xi_{j'},\zeta_{j'}) = 0$.

Finally, we observe that for the measure $c^{\mathrm{Pr}}$, the second term, $z(x(\xi_i),\xi_i))$, is independent of scenario $j$, and, similar to the $\RAE$, is therefore identical for every scenario $j$. Now, replacing the alias $j'$ with $j$ and setting $c(\xi_i,\zeta_{j}) =  c^{\mathrm{Pr}}(\xi_i,\zeta_{j})$ in Equation~\eqref{eq:FSReformulation} we obtain:
\begin{align}
    j^* &= \arg\min_{j \in \mathcal{I}} \sum_{i \in \mathcal{I}} p_i c^{\mathrm{Pr}}(\xi_i,\zeta_{j}) \nonumber \\
    &= \arg\min_{j \in \mathcal{I}} \sum_{i \in \mathcal{I}} p_i (z(x^*(\zeta_{j}), \xi_i) - z(x^*(\xi_i),\xi_i)) \nonumber \\
    &= \arg\min_{j \in \mathcal{I}} \sum_{i \in \mathcal{I}} p_i z(x^*(\zeta_{j}), \xi_i), \label{eq:FSOptimalityFirstScenario}
\end{align}
which is equivalent to the minimization of the $\RAE$ in Equation~\eqref{eq:RAEReformulationZeta} and completes the proof. $\qedsymbol$
\section{Computational configurations}
\label{sec:app_configs}

For the cost function $c^{\mathrm{Mo}}$, we solve the EVP (MIP) on four cores, while for $c^{\mathrm{Br}}$, $c^{\mathrm{Be}}$, and $c^{\mathrm{Pr}}$, we solve all single-scenario MIPs in parallel on one core each. To facilitate the solution of many similar MILP problems, we first reduce the set of scenarios $\mathcal{I}$ to 192 scenarios (one per core) using the cost function $c^{\mathrm{ID}}$, and then solve the corresponding MIPs. We then use the solutions to warm-start the MIPs for the remaining scenarios using their closest center.
For the Hybrid Algorithm~\cref{alg:Hybrid} with $r=50$, we solve each MILP on three to four cores based on the availability of the 192 maximum available cores. The second-stage DC OPF problems are solved in parallel on one core each using dual simplex, with in-place modification of the right-hand-side scenario realizations and reoptimization.

Additionally, we warm-start the reduced two-stage stochastic MIP problems with the best integer solution obtained by solving the single-scenario problems required to evaluate each cost function shown in Table~\ref{tab:OptProbRequirments} and for the Hybrid Algorithm~\cref{alg:Hybrid}. In particular, we use the integer solution of the EVP for $c^{\mathrm{Mo}}$, and the one of the scenario drawn first by the Forward Selection Algorithm~\ref{alg:ForwardSelection} for each of the other metrics (the most central scenario with respect to each cost function) and of Phase 2 of the Hybrid Algorithm~\ref{alg:Hybrid}.

\section*{AI Usage Disclosure}
Anthropic Claude (Opus 4.5 -- 4.8), accessed through both the desktop application and the Claude Code terminal interface, was used for code-related activities, including software design brainstorming, implementation support, and figure creation. All AI-generated output was carefully reviewed, tested, and frequently modified before inclusion. The authors take full responsibility for the code.
\section*{Acknowledgments}
The computational results have been achieved using the Austrian Scientific Computing (ASC) infrastructure. \\
We thank Bhumi Kumar, Florian Schimek, and Ruben van Beesten for valuable discussions that helped us to improve our work and this manuscript. \\
The work of J. M. Morales and S. Pineda is supported by the Spanish Ministry of Science, Innovation and Universities through project PID2023-148291NB-I00 and by the Department of Universities, Research and Innovation of the Regional Government of Andalusia through project DGP-PIDI-2024-00851.

\bibliographystyle{elsarticle-harv} 
\bibliography{SR_for_UC}

\end{document}